\def\bi{\begin{align}}
\def\bin{\begin{align*}}
\newcommand{\ein}{\end{align*}}
\newcommand{\ei}{\end{align}}
\newcommand{\dif}{\mathrm{d}}
\newcommand{\sumli}{\sum\limits}
\newcommand{\ba}{\begin{array}}
\newcommand{\ea}{\end{array}}
\renewcommand{\vec}[1]{\bm{#1}}
\numberwithin{equation}{section} \numberwithin{figure}{section}
\numberwithin{table}{section}
\newtheorem{exa}{Example}[section]
\theoremstyle{definition}
\newtheorem{defi}{Definition}[section]
\theoremstyle{plain}
\newtheorem{lem}{Lemma}[section]
\theoremstyle{plain}
\newtheorem{thm}{Theorem}[section]
\newtheorem{coro}{Corollary}[section]
\journal{Elsevier}
\begin{document}

\begin{frontmatter}



\title{Construction of symplectic (partitioned) Runge-Kutta
methods with continuous stage}

\author[a]{Wensheng Tang\corref{cor1}}
\ead{tangws@lsec.cc.ac.cn}\cortext[cor1]{Corresponding author.}
\address[a]{College of Mathematics and Computational Science\\
Changsha University of Science and Technology\\ Changsha 410114, P.
R. China}
\author[a]{Guangming Lang}
\ead{langguangming1984@126.com}
\author[a]{Xuqiong Luo}
\ead{luoxuqiong@163.com}

\begin{abstract}
Hamiltonian systems are one of the most important class of dynamical
systems with a geometric structure called symplecticity and the
numerical algorithms which can preserve such geometric structure are
of interest. In this article we study the construction of symplectic
(partitioned) Runge-Kutta methods with continuous stage, which
provides a new and simple way to construct symplectic (partitioned)
Runge-Kutta methods in classical sense. This line of construction of
symplectic methods relies heavily on the expansion of orthogonal
polynomials and the simplifying assumptions for (partitioned)
Runge-Kutta type methods.
\end{abstract}

\begin{keyword}
Hamiltonian system; Symplectic method; Continuous-stage Runge-Kutta
method; Continuous-stage partitioned Runge-Kutta method.

\end{keyword}

\end{frontmatter}


\section{Introduction}
\label{}



Geometric numerical integration is a subfield of the numerical
solution of differential equations, and it turn out to be very
efficient for long-time simulating the dynamical behavior of those
systems with special structures \cite{hairerlw06gni}. For
Hamiltonian systems, it is known that symplecticity of the flow is
an important characteristic property of such systems
\cite{Arnold89mmo} and to correctly reproduce this property, the
numerical algorithms called symplectic methods have been proposed
and extensively studied (see
\cite{Feng84ods,sanzc94nhp,hairerlw06gni} and references therein),
among of which symplectic (partitioned) Runge-Kutta methods are one
of the most important subclass of such algorithms
\cite{sanz88rkm,suris89ctg,sun93coh,sun95coh}. In this article, we
focus on a new and simple way to construct these methods.

Runge-Kutta (RK) method with continuous stage has been discussed and
investigated by several authors recently
\cite{hairer10epv,Tangs12tfe,Tangs12ana,Tangs14cor,miyatake14aee},
the idea of which was firstly presented by Butcher in 1970s
\cite{butcher72aat,butcher87tna}. There are few literatures
involving this kind of methods since Butcher's seminal work, until
recently, \cite{hairer10epv} exploits it to interpret the
energy-preserving collocation methods. Subsequently,
\cite{Tangs12tfe} investigates four kinds of time finite element
methods and relates them to Runge-Kutta methods with continuous
stage. Moreover, it is shown in \cite{Tangs12tfe} that some
energy-preserving RK methods including $s$-stage trapezoidal method
\cite{Iavernarop07sst}, Hamiltonian boundary value methods
\cite{brugnanoit10hbv,brugnanoit15aoh}, average vector field method
\cite{quispelm08anc,Celledoni09mmoqw} can also be related to such
methods. \cite{Tangs12ana,Tangs14cor} discuss the construction of
Runge-Kutta methods with continuous stage and present the
characterizations for several geometric-structure preserving methods
including symplectic methods, symmetric methods and
energy-preserving methods. Exponentially-fitted continuous-stage
Runge-Kutta methods with energy-preserving property for Hamiltonian
systems are also presented in \cite{miyatake14aee}. In this paper,
we aim at further investigation of construction of symplectic
(partitioned) Runge-Kutta methods, along the line of expansion of
orthogonal polynomials and with the help of simplifying assumptions
for (partitioned) Runge-Kutta type methods. This approach can
naturally provides a simple way to construct symplectic
(partitioned) Runge-Kutta methods in classical sense.

The outline of the paper is as follows. In Section 2, we provide
some preliminaries for (partitioned) Runge-Kutta methods with
continuous stage and review some existing results for the
construction of general RK type methods with continuous stage. In
section 3, we discuss the construction of symplectic (partitioned)
Runge-Kutta methods with continuous stage as well as the symplectic
(partitioned) Runge-Kutta methods in classical sense. At last, we
conclude this paper.
\section{Construction of (P)RK type methods}

In this section, we first introduce the definition of (partitioned)
Runge-Kutta method with continuous stage by following the
formulation proposed in \cite{hairer10epv}, then show some results
which are useful in constructing (P)RK type methods.

\subsection{Continuous-stage RK method}
Consider a first-order system of ordinary differential equations
(ODEs)
\begin{equation}\label{eq:ode}
\begin{cases}
\vec{\dot{z}}=\vec{f}(t,\vec{z}),\\
\vec{z}(t_0)=\vec{z}_0\in \mathbb{R}^d,
\end{cases}
\end{equation}
where the upper dot indicates differentiation with respect to $t$
and
$\vec{f}:\mathbb{R}\times\mathbb{R}^{d}\rightarrow\mathbb{R}^{d}$ is
a sufficiently smooth vector function.

\begin{defi}\label{defi1}\cite{hairer10epv,Tangs14cor}
Let $A_{\tau,\, \sigma}$ be a function of two variables $\tau$,
$\sigma$ $\in [0, 1]$, $B_\tau$ be a function of $\tau\in [0, 1]$.
Define $C_\tau=\int_0^1A_{\tau,\,\sigma}\,\dif\sigma$. The one-step
method $\Phi_h: \vec{z}_0 \mapsto \vec{z}_{1}$ given by
\begin{equation}\label{crk}
\begin{split}
&\vec{Z}_\tau=\vec{z}_0+h\int_0^{1}A_{\tau,\,\sigma}\vec{f}(t_0+C_\sigma
h,\vec{Z}_\sigma)\,\dif
\sigma,\;\tau\in[0,\,1],\\
&\vec{z}_{1}=\vec{z}_0+h\int_0^{1}B_\tau\vec{f}(t_0+C_\tau
h,\vec{Z}_\tau)\,\dif\tau,
\end{split}
\end{equation}
is called a continuous-stage Runge-Kutta (csRK) method, where
$\vec{Z}_\tau\approx\vec{z}(t_0+C_\tau h),\,\vec{z}_{1}\approx
\vec{z}(t_0+h).$
\end{defi}
In this paper, the construction of csRK methods mainly relies on the
following simplifying assumptions of order conditions
\cite{hairer10epv}
\begin{equation*}
\begin{split}
&\mathfrak{B}(\xi):\quad \int_0^1B_\tau C_\tau^{\kappa-1}\,\dif
\tau=\frac{1}{\kappa},\quad \kappa=1,\ldots,\xi,\\
&\mathfrak{C}(\eta):\quad
\int_0^1A_{\tau,\,\sigma}C_\sigma^{\kappa-1}\,\dif
\sigma=\frac{1}{\kappa}C_\tau^{\kappa},\quad \kappa=1,\ldots,\eta,\\
&\mathfrak{D}(\zeta):\quad \int_0^1B_\tau C_\tau^{\kappa-1}
A_{\tau,\,\sigma}\,\dif
\tau=\frac{1}{\kappa}B_\sigma(1-C_\sigma^{\kappa}),\quad
\kappa=1,\ldots,\zeta.
\end{split}
\end{equation*}
Analogously to the classical result \cite{butcher64ipa} given by
Butcher in 1964, we can determine the order of a csRK method with
the help of these simplifying assumptions.
\begin{thm}\label{crk:order}
If a csRK method \eqref{crk} with coefficients
$(A_{\tau,\,\sigma},\,B_\tau,\,C_\tau)$ satisfies
$\mathfrak{B}(\rho)$, $\mathfrak{C}(\alpha)$ and
$\mathfrak{D}(\beta)$, then it is at least of order $$\min
(\rho,2\alpha+2,\,\alpha+\beta+1).$$
\end{thm}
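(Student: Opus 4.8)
The plan is to follow Butcher's classical rooted-tree argument, replacing the finite sums of the discrete theory by integrals over $[0,1]$. First I would set up the order theory for csRK methods: by Taylor-expanding both the exact solution and the numerical approximation $\vec{z}_1$ in powers of $h$ and matching coefficients, the method \eqref{crk} has order $p$ if and only if the continuous-stage elementary weight $\Phi(t)$ equals $1/\gamma(t)$ for every rooted tree $t$ with $|t|\le p$. Here $|t|$ denotes the number of vertices, $\gamma(t)$ the usual density, and $\Phi(t)$ is defined recursively through an internal weight by $\Phi_\tau(\bullet)=1$, $\Phi_\tau([t_1,\dots,t_m])=\prod_{k=1}^{m}\int_0^1 A_{\tau,\,\sigma}\Phi_\sigma(t_k)\,\dif\sigma$, and $\Phi(t)=\int_0^1 B_\tau\Phi_\tau(t)\,\dif\tau$. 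This reduces the theorem to verifying the scalar identities $\Phi(t)=1/\gamma(t)$ for all trees up to the claimed order.

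Next I would translate the three simplifying assumptions into statements about these weights. Since the internal weight of a bush of $\kappa-1$ leaves is exactly $C_\tau^{\kappa-1}$, the assumption $\mathfrak{B}(\rho)$ is precisely the quadrature condition $\int_0^1 B_\tau C_\tau^{\kappa-1}\,\dif\tau=1/\kappa$, which at once gives the order conditions for bushy and tall trees up to order $\rho$. The real work is an induction on trees. Using $\mathfrak{C}(\alpha)$ I would reduce subtrees from the leaves upward: whenever a grafting edge sits below a child subtree whose internal weight is already a constant times $C_\sigma^{\kappa-1}$ with $\kappa\le\alpha$, the identity $\int_0^1 A_{\tau,\,\sigma}C_\sigma^{\kappa-1}\,\dif\sigma=C_\tau^{\kappa}/\kappa$ collapses it back to a power of $C_\tau$. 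Iterating, $\Phi_\tau(t)$ collapses to a pure power of $C_\tau$ whenever every proper subtree has order $\le\alpha$; the top-level quadrature $\mathfrak{B}(\rho)$ then finishes, and one checks the accumulated constants recombine into $1/\gamma(t)$. This disposes of every tree with $|t|\le\alpha+1$ outright.

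To reach higher orders I would bring in $\mathfrak{D}(\beta)$, which acts near the root: after the inner subtrees have been reduced to powers of $C$, an expression of the form $\int_0^1\int_0^1 B_\tau C_\tau^{\kappa-1}A_{\tau,\,\sigma}(\cdots)\,\dif\tau\,\dif\sigma$ is rewritten by $\int_0^1 B_\tau C_\tau^{\kappa-1}A_{\tau,\,\sigma}\,\dif\tau=\tfrac1\kappa B_\sigma(1-C_\sigma^{\kappa})$, which peels a branch off the root and shifts the remaining work onto higher-degree moments controlled by $\mathfrak{B}$. Tracking the largest power of $C$ produced at each step then yields exactly the three thresholds in the $\min$: trees handled by the top quadrature alone require moments up to degree $|t|$, giving the bound $\rho$; trees whose inner $\mathfrak{C}(\alpha)$-reductions pile two branches onto the root produce a factor like $C_\tau^{2\alpha}$ and, after a root integration, lead to the bound $2\alpha+2$; and trees requiring a $\mathfrak{D}(\beta)$-reduction of a $\mathfrak{C}(\alpha)$-reduced branch lead to $\alpha+\beta+1$.

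The main obstacle is the bookkeeping of this induction: proving that every rooted tree with $|t|\le\min(\rho,2\alpha+2,\alpha+\beta+1)$ can indeed be fully dismantled by some sequence of these three reductions, and that each invocation of $\mathfrak{C}(\kappa)$, $\mathfrak{D}(\kappa)$, $\mathfrak{B}(\kappa)$ stays within its permitted range $\kappa\le\alpha$, $\kappa\le\beta$, $\kappa\le\rho$. This is exactly the content of Butcher's 1964 argument \cite{butcher64ipa}; the only genuinely new point is that all the manipulations are now integral identities rather than sums, so I would verify that the interchanges of the iterated integrals over $[0,1]$ are legitimate (ensured by the assumed smoothness of the coefficient functions) and that no feature of the classical proof relies essentially on the discreteness of the stage index.
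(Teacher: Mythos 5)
Your proposal is correct and takes essentially the same route as the paper, whose entire proof is a one-line appeal to the classical simplifying-assumptions argument (Theorem 7.4 of \cite{hairernw93sod}) carried over with stage sums replaced by integrals over $[0,1]$. You are simply spelling out the content of that citation --- the tree-indexed order conditions, the reductions afforded by $\mathfrak{B}$, $\mathfrak{C}$, $\mathfrak{D}$, and the observation that nothing in the classical bookkeeping depends on the discreteness of the stage index.
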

\begin{proof}
This statement can be proved by using the similar idea to the one
used in Theorem 7.4  \cite{hairernw93sod}.
\end{proof}

According to Proposition 2.1 shown in \cite{Tangs14cor}, we assume
$B_\tau=1$ in the current paper and to proceed conveniently,
hereafter we also assume $C_\tau=\tau$. In such a case,
$\mathfrak{B}(\xi)$ is reduced to $$\int_0^1 \tau^{\kappa-1}\,\dif
\tau=\frac{1}{\kappa},\quad \kappa=1,\ldots,\xi$$ which implies
$\mathfrak{B}(\xi)$ holds with $\xi=\infty$. Now we introduce the
$\iota$-degree normalized shifted Legendre polynomial denoted by
$P_\iota(x)$, which can be explicitly computed by the Rodrigues
formula
$$P_0(x)=1,\;P_\iota(x)=\frac{\sqrt{2\iota+1}}{\iota!}\frac{{\dif}^\iota}{\dif x^\iota}
\Big(x^\iota(x-1)^\iota\Big),\; \;\iota=1,2,3,\cdots.$$ A well-known
property of Legendre polynomials is that they are orthogonal to each
other with respect to the $L^2$ inner product on $[0,\,1]$
$$\int_0^1 P_\iota(t) P_\kappa(t)\,\dif t= \delta_{\iota\kappa},\quad\iota,\,
\kappa=0,1,2,\cdots,$$
and they as well satisfy the following integration formulae
\begin{equation}\label{property}
\begin{split} &\int_0^xP_0(t)\,\dif
t=\xi_1P_1(x)+\frac{1}{2}P_0(x), \\
&\int_0^xP_\iota(t)\,\dif
t=\xi_{\iota+1}P_{\iota+1}(x)-\xi_{\iota}P_{\iota-1}(x),\quad
\iota=1,2,3,\cdots, \\
&\int_x^1P_\iota(t)\,\dif
t=\delta_{\iota0}-\int_0^{x}P_\iota(t)\,\dif t,\quad
\iota=0,1,2,\cdots,
\end{split}
\end{equation}
where $\xi_\iota=\frac{1}{2\sqrt{4\iota^2-1}}$ and $\delta_{ij}$ is
the Kronecker symbol.

With the help of \eqref{property}, by exploiting the expansion of
Legendre polynomials for the coefficients of a csRK method and the
corresponding simplifying assumptions, we have the following result
for construction of csRK methods (more details see
\cite{Tang13tfe,Tangs14cor}).

\begin{lem}\cite{Tang13tfe}
For a csRK method with $B_\tau\equiv1$ and $C_\tau=\tau$, we have
the following statements:
\begin{itemize}
\item[(a)] $\mathfrak{C}(\alpha)$ is equivalent to the fact that $A_{\tau,\,\sigma}$
takes the form
$$A_{\tau,\,\sigma}=\sum_{\iota=0}^{\alpha-1}\int_0^{\tau}P_\iota(x)\,\dif
x\,P_\iota(\sigma)+\sum_{\iota=\alpha}^\infty \gamma_\iota(\tau)
P_\iota(\sigma),$$ where $\gamma_\iota(\tau)$, $\iota\geq\alpha$ are
arbitrary functions;
\item[(b)] $\mathfrak{D}(\beta)$ is equivalent to the fact  that $A_{\tau,\,\sigma}$
takes the form
$$A_{\tau,\,\sigma}=\sum_{\iota=0}^{\beta-1}\int_{\sigma}^1P_\iota(x)\,\dif
x\, P_\iota(\tau)+\sum_{\iota=\beta}^\infty \lambda_\iota(\sigma)
P_\iota(\tau),$$ where $\lambda_\iota(\sigma)$, $\iota\geq\beta$ are
arbitrary functions.
\end{itemize}
\end{lem}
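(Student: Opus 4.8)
The plan is to exploit the completeness and orthonormality of the shifted Legendre polynomials $\{P_\iota\}_{\iota\ge 0}$ as a basis of $L^2[0,1]$, and to convert the monomial-based simplifying assumptions into prescriptions on the Legendre coefficients of $A_{\tau,\,\sigma}$. The essential trick is to recognize both right-hand sides as integrals of monomials, so that the assumptions become statements about integrating $A_{\tau,\,\sigma}$ against a polynomial of controlled degree.

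For part (a), I would first expand $A_{\tau,\,\sigma}$, for each fixed $\tau$, in the Legendre basis in the variable $\sigma$:
$$A_{\tau,\,\sigma}=\sum_{\iota=0}^\infty \phi_\iota(\tau)P_\iota(\sigma),\qquad \phi_\iota(\tau)=\int_0^1 A_{\tau,\,\sigma}P_\iota(\sigma)\,\dif\sigma.$$
The crucial observation is that the right-hand side of $\mathfrak{C}(\alpha)$ can be rewritten as $\frac{1}{\kappa}\tau^\kappa=\int_0^\tau x^{\kappa-1}\,\dif x$, so that $\mathfrak{C}(\alpha)$ reads $\int_0^1 A_{\tau,\,\sigma}\,\sigma^{\kappa-1}\,\dif\sigma=\int_0^\tau x^{\kappa-1}\,\dif x$ for $\kappa=1,\dots,\alpha$. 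Since $\{\sigma^{\kappa-1}\}_{\kappa=1}^{\alpha}$ and $\{P_\iota(\sigma)\}_{\iota=0}^{\alpha-1}$ span the same space of polynomials of degree $<\alpha$, by linearity this system is equivalent to
$$\int_0^1 A_{\tau,\,\sigma}P_\iota(\sigma)\,\dif\sigma=\int_0^\tau P_\iota(x)\,\dif x,\qquad \iota=0,\dots,\alpha-1,$$
that is, to prescribing $\phi_\iota(\tau)=\int_0^\tau P_\iota(x)\,\dif x$ for $\iota=0,\dots,\alpha-1$ while leaving $\phi_\iota(\tau)=:\gamma_\iota(\tau)$ for $\iota\ge\alpha$ completely free. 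Substituting back into the expansion yields exactly the stated form, which settles the ``only if'' direction; the ``if'' direction follows by inserting the prescribed form and using orthonormality to check that the contributions from $\iota\ge\alpha$ vanish when integrated against any $\sigma^{\kappa-1}$ with $\kappa\le\alpha$.

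Part (b) is completely parallel, the only change being that one expands $A_{\tau,\,\sigma}$ in the Legendre basis in the variable $\tau$ and rewrites the right-hand side of $\mathfrak{D}(\beta)$ as $\frac{1}{\kappa}(1-\sigma^\kappa)=\int_\sigma^1 x^{\kappa-1}\,\dif x$. The same linearity argument then forces the Legendre coefficients in $\tau$ to equal $\int_\sigma^1 P_\iota(x)\,\dif x$ for $\iota=0,\dots,\beta-1$, with the remaining coefficients $\lambda_\iota(\sigma)$, $\iota\ge\beta$, arbitrary. The integration formulae \eqref{property} may be invoked to re-express $\int_0^\tau P_\iota$ and $\int_\sigma^1 P_\iota$ in closed Legendre form if desired, but they are not needed for the equivalence itself.

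The step I expect to require the most care is the passage between the monomial conditions and the Legendre conditions, together with the term-by-term integration of the infinite series: one must justify that the conditions for $\kappa=1,\dots,\alpha$ (resp. $\beta$) decouple cleanly into prescriptions on the first $\alpha$ (resp. $\beta$) coefficients and impose no constraint on the higher ones. This rests on the triangular change of basis between monomials and Legendre polynomials and on orthogonality eliminating all cross terms; making this rigorous for the $L^2$-convergent expansion is the only genuinely technical point, and it is handled by the standard completeness argument for the Legendre system on $[0,\,1]$.
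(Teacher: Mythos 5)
Your argument is correct, and it is essentially the approach the paper intends: the paper gives no proof of this lemma (it defers to the cited thesis \cite{Tang13tfe}), but the surrounding text explicitly announces that the result follows ``by exploiting the expansion of Legendre polynomials for the coefficients'' together with the simplifying assumptions, which is exactly your expansion of $A_{\tau,\,\sigma}$ in the orthonormal basis $\{P_\iota\}$ in $\sigma$ (resp.\ $\tau$) combined with the triangular change of basis between $\{\sigma^{\kappa-1}\}_{\kappa=1}^{\alpha}$ and $\{P_\iota(\sigma)\}_{\iota=0}^{\alpha-1}$ and the rewriting $\tfrac{1}{\kappa}\tau^{\kappa}=\int_0^{\tau}x^{\kappa-1}\,\dif x$, $\tfrac{1}{\kappa}(1-\sigma^{\kappa})=\int_{\sigma}^{1}x^{\kappa-1}\,\dif x$. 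No gaps; your closing remark about justifying term-by-term integration is the right caveat and is handled by the $L^2$ completeness of the Legendre system as you state.
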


\begin{thm}\cite{Tangs14cor} \label{mainthm}
For a csRK method with $B_\tau\equiv1$ and $C_\tau=\tau$ (then
$\mathfrak{B}(\infty)$ holds), the following two statements are
equivalent:
\begin{itemize}
\item[(a)] Both $\mathfrak{C}(\alpha)$ and $\mathfrak{D}(\beta)$
hold;
\item[(b)] The coefficient $A_{\tau,\,\sigma}$ has the  form
\begin{equation}\label{coef}
A_{\tau,\,\sigma}=\frac{1}{2}+\sum_{\iota=0}^{N_1}\xi_{\iota+1}
P_{\iota+1}(\tau)P_\iota(\sigma)-\sum_{\iota=0}^{N_2}\xi_{\iota+1}
P_{\iota+1}(\sigma)P_\iota(\tau)+\sum_{i\geq\beta \atop
j\geq\alpha}\gamma_{ij}P_i(\tau)P_j(\sigma),
\end{equation}
where $N_1=\max(\alpha-1,\,\beta-2)$,
$N_2=\max(\alpha-2,\,\beta-1)$,
$\xi_\iota=\frac{1}{2\sqrt{4\iota^2-1}}$ and $\gamma_{ij}$ are
arbitrary real numbers.
\end{itemize}
\end{thm}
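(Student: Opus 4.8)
The plan is to pass to the double shifted-Legendre expansion
$$A_{\tau,\,\sigma}=\sum_{i,\,j\geq0}a_{ij}P_i(\tau)P_j(\sigma),$$
which is legitimate since the $P_\iota$ form a complete orthonormal system on $[0,1]$, and to read off what each simplifying assumption demands of the coefficient array $(a_{ij})$ by combining the two characterizations of the preceding Lemma with the integration formulae \eqref{property}.

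First I would reinterpret part~(a) of the Lemma: $\mathfrak{C}(\alpha)$ holds precisely when, for each $j=0,\ldots,\alpha-1$, the coefficient of $P_j(\sigma)$ in $A_{\tau,\,\sigma}$ equals $\int_0^\tau P_j(x)\,\dif x$, the columns $j\geq\alpha$ remaining free functions of $\tau$. Substituting the first two identities of \eqref{property} into $\int_0^\tau P_j(x)\,\dif x$ turns each fixed column into a short Legendre combination in $\tau$: the column $j=0$ forces $a_{00}=\tfrac12$ and $a_{10}=\xi_1$, while each column $1\leq j\leq\alpha-1$ forces $a_{j+1,\,j}=\xi_{j+1}$ and $a_{j-1,\,j}=-\xi_j$, every other entry of these columns vanishing. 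Symmetrically, rewriting $\int_\sigma^1 P_i(x)\,\dif x=\delta_{i0}-\int_0^\sigma P_i(x)\,\dif x$ through the third and then the first two identities of \eqref{property}, part~(b) of the Lemma shows that $\mathfrak{D}(\beta)$ fixes the rows $i<\beta$: the row $i=0$ forces $a_{00}=\tfrac12$ and $a_{01}=-\xi_1$, and each row $1\leq i\leq\beta-1$ forces $a_{i,\,i+1}=-\xi_{i+1}$ and $a_{i,\,i-1}=\xi_i$.

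Overlaying the two sets of constraints is the core of the argument. Both assumptions prescribe the \emph{same} subdiagonal value $a_{\iota+1,\,\iota}=\xi_{\iota+1}$ and the \emph{same} superdiagonal value $a_{\iota,\,\iota+1}=-\xi_{\iota+1}$, together with $a_{00}=\tfrac12$; they differ only in the range of $\iota$ each one controls. Indexing carefully, $\mathfrak{C}(\alpha)$ fixes the subdiagonal for $\iota\leq\alpha-1$ and the superdiagonal for $\iota\leq\alpha-2$, whereas $\mathfrak{D}(\beta)$ fixes the subdiagonal for $\iota\leq\beta-2$ and the superdiagonal for $\iota\leq\beta-1$. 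Taking unions of the ranges produces exactly $N_1=\max(\alpha-1,\,\beta-2)$ and $N_2=\max(\alpha-2,\,\beta-1)$, while every entry lying in a fixed column ($j<\alpha$) or a fixed row ($i<\beta$)---apart from $a_{00}=\tfrac12$ and the two diagonals just described---is pinned to zero, and the entries with $i\geq\beta$ and $j\geq\alpha$ are left completely free. Collecting terms reproduces \eqref{coef} and proves (a)$\Rightarrow$(b); the converse is immediate, since reading the coefficients off \eqref{coef} shows that each column $j<\alpha$ and each row $i<\beta$ already has the form required by the two parts of the Lemma.

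The delicate point I anticipate is the bookkeeping in the overlap $\{\,i<\beta,\ j<\alpha\,\}$, where both assumptions prescribe the same entry and so must be checked never to disagree. Concretely this reduces to confirming that the two indexings of the sub- and superdiagonals assign the common values $\xi_{\iota+1}$ and $-\xi_{\iota+1}$, and that every non-tridiagonal entry of the overlap is forced to zero by both conditions. Once this compatibility is verified, the index ranges merge cleanly into the stated $N_1$ and $N_2$ instead of clashing, and the closed form \eqref{coef} drops out.
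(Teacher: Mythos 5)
Your proof is correct and is essentially the intended argument: the paper itself states this theorem only as a citation of \cite{Tangs14cor} without reproducing a proof, and the derivation there proceeds exactly as you do, by overlaying the two expansions of the preceding Lemma after rewriting $\int_0^\tau P_\iota(x)\,\dif x$ and $\int_\sigma^1 P_\iota(x)\,\dif x$ via \eqref{property}. Your index bookkeeping (subdiagonal fixed for $\iota\leq\alpha-1$ by $\mathfrak{C}$ and $\iota\leq\beta-2$ by $\mathfrak{D}$, superdiagonal for $\iota\leq\alpha-2$ and $\iota\leq\beta-1$, hence $N_1$ and $N_2$ as stated) and the compatibility check on the overlapping tridiagonal entries are both accurate, and the converse direction is handled correctly.
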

By combining Theorem \ref{crk:order} and Theorem \ref{mainthm} we
can easily construct lots of csRK methods, the order of which are
given by $\min (\infty,\,2\alpha+2,\,\alpha+\beta+1)=\min
(2\alpha+2,\,\alpha+\beta+1)$.

\subsection{Continuous-stage PRK method}
For a partitioned ODEs given by
\begin{equation}
\begin{cases}
\vec{\dot{y}}=\vec{f}(t,\vec{y},\vec{z}),\;\vec{y}(t_0)=\vec{y}_0\in
{\mathbb{R}}^{d},\\
\vec{\dot{z}}=\vec{g}(t,\vec{y},\vec{z}),\;\vec{z}(t_0)=\vec{z}_0\in
{\mathbb{R}}^{s},
\end{cases}
\end{equation}
we can consider methods that deal with the $\vec{y}$ and $\vec{z}$
variables by two different csRK methods, which then produce the
following definition.
\begin{defi}\cite{Tang13tfe,Tangs14cor}\label{defi2}
Assume that $C_\tau=\int_0^1A_{\tau,\,\sigma}\,\dif\sigma$ and
$\widehat{C}_\tau=\int_0^1\widehat{A}_{\tau,\,\sigma}\,\dif\sigma$.
The following one-step method
\begin{equation}\label{cprk}
\begin{split}
&\vec{Y}_\tau=\vec{y}_0+h\int_0^{1}A_{\tau,\,\sigma}\vec{f}(t_0+C_\sigma
h,\vec{Y}_\sigma,\vec{Z}_\sigma)\,\dif\sigma,\;\;\tau\in [0, 1],\\
&\vec{Z}_\tau=\vec{z}_0+h\int_0^{1}\widehat{A}_{\tau,\,\sigma}\vec{g}(t_0+C_\sigma
h,\vec{Y}_\sigma,\vec{Z}_\sigma)\,\dif\sigma,\;\;\tau\in [0, 1],\\
&\vec{y}_{1}=\vec{y}_0+h\int_0^{1}B_\tau\vec{f}(t_0+C_\tau
h,\vec{Y}_\tau,\vec{Z}_\tau)\,\dif\tau,\\
&\vec{z}_{1}=\vec{z}_0+h\int_0^{1}\widehat{B}_\tau\vec{g}(t_0+C_\tau
h,\vec{Y}_\tau,\vec{Z}_\tau)\,\dif\tau.
\end{split}
\end{equation}
is referred to as continuous-stage partitioned Runge-Kutta (csPRK)
method.
\end{defi}

Similarly as those order conditions considered for classical PRK
method \cite{cohenh11lei}, we propose the following simplifying
assumptions of order conditions for the csPRK method
\begin{equation}\label{PRK-simpl-assump}
\begin{split}
&\mathcal {B}(\xi):\;\int_0^1B_\tau
C_\tau^{\kappa-1}\widehat{C_\tau}^{\iota}\,\dif
\tau=\frac{1}{\kappa+\iota},\;\;1\leq
\kappa+\iota\leq\xi;\\
&\mathcal{C}(\eta):\;\int_0^1A_{\tau,\,\sigma}C_\sigma^{\kappa-1}
\widehat{C_\sigma}^{\iota}\,\dif
\sigma=\frac{C_\tau^{\kappa+\iota}}{\kappa+
\iota},\;\;1\leq \kappa+\iota\leq\eta,\;\;\tau\in [0, 1];\\
&\widehat{\mathcal
{C}}(\widehat{\eta}):\;\int_0^1\widehat{A}_{\tau,\,\sigma}C_\sigma^{\kappa-1}
\widehat{C_\sigma}^{\iota}\,\dif
\sigma=\frac{\widehat{C_\tau}^{\kappa+\iota}}{\kappa+\iota}
,\;\; 1\leq \kappa+\iota\leq\widehat{\eta},\;\;\tau\in [0, 1];\\
&\mathcal {D}(\zeta):\;\int_0^1B_\tau
C_\tau^{\kappa-1}\widehat{C_\tau}^{\iota}A_{\tau,\,\sigma}\,\dif
\tau=\frac{B_\sigma(1-
\widehat{C_\sigma}^{\kappa+\iota})}{\kappa+\iota},\;\;
1\leq \kappa+\iota\leq\zeta,\;\;\sigma\in [0, 1];\\
&\widehat{\mathcal {D}}(\widehat{\zeta}):\;\int_0^1\widehat{B}_\tau
C_\tau^{\kappa-1}\widehat{C_\tau}^{\iota}\widehat{A}_{\tau,\,\sigma}\,\dif
\tau= \frac{\widehat{B}_\sigma(1-
\widehat{C_\sigma}^{\kappa+\iota})}{\kappa+\iota},\;\;1\leq
\kappa+\iota\leq\widehat{\zeta},\;\;\sigma\in [0,\,1].
\end{split}
\end{equation}
\begin{thm}\label{cprk:order}
If a csPRK method \eqref{cprk} with coefficients
$(A_{\tau,\,\sigma},\,B_\tau,\,C_\tau;\;\widehat{A}_{\tau,\,\sigma},
\,\widehat{B}_\tau,\,\widehat{C}_\tau)$ satisfies
$\widehat{B}_\tau\equiv B_\tau$ for $\tau\in[0,1]$, and moreover,
all $\mathcal {B}(\rho)$, $\mathcal {C}(\alpha)$, $\mathcal
{\widehat{C}}(\alpha)$, $\mathcal {D}(\beta)$, $\mathcal
{\widehat{D}}(\beta)$ hold,  then it is at least of order
$$\min (\rho,2\alpha+2,\,\alpha+\beta+1).$$
\end{thm}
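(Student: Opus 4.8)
The plan is to mirror the proof of Theorem~\ref{crk:order}, adapting the classical order theory for partitioned Runge--Kutta methods (as in Theorem~7.4 of \cite{hairernw93sod} and its partitioned analogue, with order conditions as in \cite{cohenh11lei}) to the continuous-stage setting. First I would set up the bookkeeping via \emph{bi-colored} rooted trees: since the right-hand sides $\vec{f}$ and $\vec{g}$ are integrated by two different families of coefficients, each node of a tree carries a color (say ``white'' for the $\vec{y}$-component handled by $(A_{\tau,\sigma},B_\tau)$ and ``black'' for the $\vec{z}$-component handled by $(\widehat{A}_{\tau,\sigma},\widehat{B}_\tau)$). Expanding the exact flow of the partitioned system and the one-step map \eqref{cprk} into P-series (the partitioned B-series), the method is of order $p$ precisely when, for every bi-colored tree $t$ of order $|t|\leq p$, the continuous-stage \emph{elementary weight}---an iterated integral built from $A,\widehat{A},B,\widehat{B}$ according to the colored structure of $t$---equals $1/\gamma(t)$, the reciprocal of the density of $t$.

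Next I would use the simplifying assumptions to collapse these conditions inductively. The assumptions $\mathcal{C}(\alpha)$ and $\widehat{\mathcal{C}}(\alpha)$ serve as \emph{stage-order} conditions: whenever a subtree of order at most $\alpha$ is attached to a node through an $A$- (respectively $\widehat{A}$-) integration, they let one replace that subtree by the appropriate power of $C_\tau$ (respectively $\widehat{C}_\tau$), thereby pruning the tree while leaving its elementary weight unchanged. Dually, $\mathcal{D}(\beta)$ and $\widehat{\mathcal{D}}(\beta)$ act at the root, converting the order condition of a tree with a distinguished root branch into that of a strictly smaller tree. Here the hypothesis $\widehat{B}_\tau\equiv B_\tau$ is essential: it forces the two root quadratures to coincide, so that the right-hand sides of $\mathcal{D}(\beta)$ and $\widehat{\mathcal{D}}(\beta)$ share the same weight $B_\sigma$ and the two reductions close up consistently irrespective of the colors involved. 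Iterating these prunings reduces every admissible bi-colored tree to the ``bushy'' trees (a single root carrying only leaves), whose order conditions are exactly the quadrature identities furnished by $\mathcal{B}(\rho)$.

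The three entries of the minimum arise from the same order counting as in the scalar case. The value $\rho$ is the quadrature order supplied directly by $\mathcal{B}(\rho)$. The value $\alpha+\beta+1$ is the highest tree order that can be cleared by pairing a single root reduction via $\mathcal{D}(\beta)$ or $\widehat{\mathcal{D}}(\beta)$ with stage-order prunings via $\mathcal{C}(\alpha)$ or $\widehat{\mathcal{C}}(\alpha)$. The value $2\alpha+2$ is the superconvergence cap imposed by the stage order alone: with only the $\mathcal{C}$-type assumptions available at order $\alpha$, the elementary weights match $1/\gamma(t)$ only up to trees of order $2\alpha+2$, beyond which a subtree of order larger than $\alpha$ survives every pruning. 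Taking the smallest of the three bounds yields the claimed order $\min(\rho,2\alpha+2,\alpha+\beta+1)$.

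I expect the main obstacle to be the color bookkeeping in the inductive pruning step. Unlike the scalar situation of Theorem~\ref{crk:order}, one must verify that at each node the \emph{correct} variant ($\mathcal{C}$ versus $\widehat{\mathcal{C}}$, or $\mathcal{D}$ versus $\widehat{\mathcal{D}}$) is the one that applies, and that mixed-color trees---where white and black nodes alternate---reduce without leftover terms. This is exactly what the symmetric hypotheses are designed to guarantee: both $\mathcal{C}(\alpha)$ and $\widehat{\mathcal{C}}(\alpha)$ at the \emph{same} order $\alpha$, both $\mathcal{D}(\beta)$ and $\widehat{\mathcal{D}}(\beta)$ at the same order $\beta$, together with $\widehat{B}_\tau\equiv B_\tau$. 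With these in hand, the reduction at every node is available in the required color, the induction proceeds in parallel with the non-partitioned argument, and the order estimate follows.
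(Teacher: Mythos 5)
Your proposal is correct and follows essentially the same route as the paper, which simply invokes the classical argument of Theorem 7.4 in Hairer--N{\o}rsett--Wanner with bi-colored trees replacing ordinary trees; your sketch fills in the details (P-series expansion, pruning via the $\mathcal{C}$/$\widehat{\mathcal{C}}$ and $\mathcal{D}$/$\widehat{\mathcal{D}}$ assumptions, the role of $\widehat{B}_\tau\equiv B_\tau$, and the origin of the three terms in the minimum) that the paper leaves to the cited reference.
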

\begin{proof}
The proof is the same as that for classical (non-partitioned)
Runge-Kutta methods (see \cite{hairernw93sod}), in which the
bi-colored trees have to be considered instead of trees.
\end{proof}

If we let $B_\tau=\widehat{B}_\tau\equiv1$ and
$C_\tau=\widehat{C}_\tau=\tau$, then by using Theorem \ref{mainthm}
it is easy to design $A_{\tau,\,\sigma}$ and
$\widehat{A}_{\tau,\,\sigma}$ such that all the conditions described
in Theorem \ref{cprk:order} are fulfilled. This allows us to
construct lots of csPRK methods of arbitrarily high order.

\subsection{Classical (P)RK method retrieved}

Though we have discussed the construction of csRK and csPRK methods
above, it should be noticed that in general the practical
implementation of cs(P)RK methods needs the use of numerical
quadrature formula. Therefore, next we present classical (P)RK
methods retrieved by using quadrature formulae.

\subsubsection{RK method retrieved}
Applying a quadrature formula denoted by $(b_i,c_i)_{i=1}^r$ to
\eqref{crk}, we derive an $r$-stage classical RK method
\begin{equation}\label{crk:quad}
\begin{split}
&\vec{\overline{Z}}_i=\vec{z}_0+h\sum_{j=1}^r(b_jA_{c_i,\,c_j})\vec{f}(t_0+c_{j}h,
\vec{\overline{Z}}_j),\quad i=1,\cdots,r,\\
&\vec{z}_{1}=\vec{z}_0+h\sum_{i=1}^rb_{i}\vec{f}(t_0+c_{i}h,\vec{\overline{Z}}_i),
\end{split}
\end{equation}
where $A_{c_i,\,c_j}=(A_{\tau,\,\sigma})|_{\tau=c_i,\,\sigma=c_j}$
and $\vec{\overline{Z}}_i
\approx\vec{Z}_{c_i}=(\vec{Z}_{\tau})|_{\tau=c_i}$. The scheme
\eqref{crk:quad} can also be formulated by the following Butcher
tableau

\[\ba{c|c}c & (b_{j}A_{c_i,\,c_j})_{r\times r}\\[4pt]\hline\\[-10pt] & b^T \ea\]
where $c=(c_1,\cdots,c_r)^T,\,b=(b_1,\cdots,b_r)^T$.
\begin{thm}\label{qua:csRK}
Assume $A_{\tau,\,\sigma}$ is a bivariate polynomial of degree
$d^{\tau}$ in $\tau$ and degree $d^{\sigma}$ in $\sigma$, and the
quadrature formula $(b_i,c_i)_{i=1}^r$ is of order\footnote{The
numerical quadrature formula denoted by $(b_i,c_i)_{i=1}^r$ is
called order $p$ if and only if $\int_0^1 f(x)\, \dif
x=\sumli_{i=1}^r b_i f(c_i)$ exactly holds for any polynomial $f(x)$
of degree up to $p-1$.} $p$. If a csRK method \eqref{crk} with
coefficients $(A_{\tau,\,\sigma},\,B_\tau,\,C_\tau)$ satisfies
$B_\tau\equiv1 ,\,C_\tau=\tau$ (then $\mathfrak {B}(\infty)$ holds)
and both $\mathfrak{C}(\eta)$, $\mathfrak{D}(\zeta)$ hold, then the
classical RK method with coefficients $(b_{j}A_{c_i,c_j},b_i,\,c_i)$
is at least of order
$$\min(p, 2\alpha+2, \alpha+\beta+1),$$
where $\alpha=\min(\eta, p-d^{\sigma})$ and $\beta=\min(\zeta,
p-d^{\tau})$.
\end{thm}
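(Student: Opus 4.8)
The plan is to reduce the order of the discretized classical RK method to a statement about simplifying assumptions that survive the quadrature, and then invoke the already-established order theorem (Theorem \ref{crk:order}). The central observation is that a quadrature formula of order $p$ integrates polynomials of degree up to $p-1$ exactly, so whenever the simplifying assumptions of the csRK method involve integrands that happen to be polynomials of low enough degree, these continuous conditions descend verbatim to the discrete Butcher tableau $(b_jA_{c_i,c_j},\,b_i,\,c_i)$.

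First I would verify the discrete analogue of $\mathfrak{B}$. Since $B_\tau\equiv 1$ and $C_\tau=\tau$, the integrand $\tau^{\kappa-1}$ is a polynomial of degree $\kappa-1$, so the quadrature reproduces $\int_0^1\tau^{\kappa-1}\,\dif\tau=\frac{1}{\kappa}$ exactly for $\kappa-1\leq p-1$, i.e. for $\kappa\leq p$. Hence the classical condition $B(p)$: $\sum_i b_ic_i^{\kappa-1}=\frac{1}{\kappa}$ holds up to $\kappa=p$. Next, for $\mathfrak{C}(\eta)$, the integrand $A_{\tau,\sigma}C_\sigma^{\kappa-1}=A_{\tau,\sigma}\sigma^{\kappa-1}$ has degree $d^\sigma+\kappa-1$ in $\sigma$; the quadrature is exact in the $\sigma$-variable provided $d^\sigma+\kappa-1\leq p-1$, that is $\kappa\leq p-d^\sigma$. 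Therefore the discrete condition $C(\alpha)$: $\sum_j b_jA_{c_i,c_j}c_j^{\kappa-1}=\frac{1}{\kappa}c_i^\kappa$ holds for all $\kappa\leq\min(\eta,p-d^\sigma)=\alpha$. The symmetric argument for $\mathfrak{D}(\zeta)$ uses that $B_\tau C_\tau^{\kappa-1}A_{\tau,\sigma}=\tau^{\kappa-1}A_{\tau,\sigma}$ has degree $d^\tau+\kappa-1$ in $\tau$, so the $\tau$-quadrature is exact when $\kappa\leq p-d^\tau$, giving the discrete $D(\beta)$ with $\beta=\min(\zeta,p-d^\tau)$.

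Once the three discrete simplifying assumptions $B(p)$, $C(\alpha)$, $D(\beta)$ are in hand, I would simply feed them into Theorem \ref{crk:order} — which, although stated for csRK methods, rests on Butcher's classical argument and applies verbatim to the classical $r$-stage RK method obtained from the tableau — to conclude that the method has order at least $\min(p,\,2\alpha+2,\,\alpha+\beta+1)$. It remains only to note that the $\rho$ in that theorem can be taken to be $p$ here, so the bound reads exactly as claimed.

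The step I expect to be the main obstacle is the careful bookkeeping of exactly which degree each integrand attains and, correspondingly, the largest index $\kappa$ for which the quadrature remains exact; in particular one must be attentive to whether it is the $\sigma$-degree or the $\tau$-degree that controls each assumption, since $\mathfrak{C}$ integrates in $\sigma$ while $\mathfrak{D}$ integrates in $\tau$. A secondary subtlety is justifying that the continuous order bound of Theorem \ref{crk:order} transfers to the discrete scheme: one should argue that the quadrature-induced $B(p)$, $C(\alpha)$, $D(\beta)$ are precisely the classical Butcher simplifying assumptions, so that the classical order theorem (the discrete counterpart of Theorem \ref{crk:order}) applies directly with $\rho$ replaced by $p$, $\eta$-$\zeta$ effectively replaced by $\alpha$-$\beta$, and no additional continuous structure is needed.
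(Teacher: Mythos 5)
Your proposal is correct and follows essentially the same route as the paper's own proof: apply the order-$p$ quadrature to the integrals in $\mathfrak{B}$, $\mathfrak{C}(\eta)$, $\mathfrak{D}(\zeta)$, track the polynomial degree of each integrand to determine the largest $\kappa$ for which exactness survives (giving the discrete $B(p)$, $C(\alpha)$, $D(\beta)$ with $\alpha=\min(\eta,p-d^{\sigma})$, $\beta=\min(\zeta,p-d^{\tau})$, after multiplying the $D$-identity by $b_j$), and then invoke Butcher's classical order theorem.
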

\begin{proof}
Since $\int_0^1 f(x)\, \dif x=\sumli_{i=1}^r b_i f(c_i)$ holds
exactly for any polynomial $f(x)$ of degree up to $p-1$, using the
quadrature formula $(b_i,c_i)_{i=1}^r$ to compute the integrals of
$\mathfrak{B}(\xi)$, $\mathfrak{C}(\eta)$, $\mathfrak{D}(\zeta)$ it
gives
\begin{equation*}
\begin{split}
&\sum_{i=1}^rb_ic_i^{\kappa-1}=\frac{1}{\kappa},\;\kappa=1,\cdots,p,\\
&\sum_{j=1}^r(b_{j}A_{c_i,\,c_j})c_j^{\kappa-1}=\frac{c_i^{\kappa}}{\kappa},
\;i=1,\cdots,r,\;\kappa=1,\cdots,\alpha,\\
&\sum_{i=1}^rb_ic_i^{\kappa-1}(b_{j}A_{c_i,\,c_j})=
\frac{b_j}{\kappa}(1-c_j^{\kappa}),\;j=1,\cdots,r,\;\kappa=1,\cdots,\beta.
\end{split}
\end{equation*}
where $\alpha=\min(\eta, p-d^{\sigma})$ and $\beta=\min(\zeta,
p-d^{\tau})$. Note that in the last formula, we have multiplied a
factor $b_j$ afterwards from both sides of the original identity.
These formulae imply that the RK method with coefficients
$(b_{j}A_{c_i,c_j},b_i,\,c_i)$ satisfies $B(p)$, ${C}(\alpha)$ and
${D}(\beta)$ (i.e., the classical simplifying assumptions for RK
method, see \cite{butcher64ipa,hairerw96sod}), which provides the
order of the method from a classical result by Butcher
\cite{butcher64ipa}.
\end{proof}

\subsubsection{PRK method retrieved}

Assume that $B_\tau=\widehat{B}_\tau\equiv1$ and
$C_\tau=\widehat{C}_\tau=\tau$, applying a quadrature formula
denoted by $(b_i,c_i)_{i=1}^r$ to \eqref{cprk}, we then derive a
$r$-stage classical PRK method
\begin{equation}\label{PRK}
\begin{split}
&\vec{\overline{Y}}_i=\vec{y}_0+h\sum_{j=1}^r(b_{j}A_{c_i,\,c_j})
\vec{f}(t_0+c_{j}h,\vec{\overline{Y}}_j,\vec{\overline{Z}}_j),
\quad i=1,\cdots,r,\\
&\vec{\overline{Z}}_i=\vec{z}_0+h\sum_{j=1}^r(b_{j}\widehat{A}_{c_i,\,c_j})
\vec{g}(t_0+c_{j}h,\vec{\overline{Y}}_j,\vec{\overline{Z}}_j),\quad i=1,\cdots,r,\\
&\vec{y}_{1}=\vec{y}_0+h\sum_{i=1}^rb_{i}\vec{f}(t_0+c_{i}h,
\vec{\overline{Y}}_i,\vec{\overline{Z}}_i),\\
&\vec{z}_{1}=\vec{z}_0+h\sum_{i=1}^rb_{i}\vec{g}(t_0+c_{i}h,
\vec{\overline{Y}}_i,\vec{\overline{Z}}_i),
\end{split}
\end{equation}
which can be formulated by a pair of Butcher tableaux below

\[\ba{c|c} c & (b_{j}A_{c_i,\,c_j})_{r\times r}\\[4pt]\hline\\[-10pt]& b^T \ea
\qquad\qquad\qquad\qquad \ba{c|c} c &
(b_{j}\widehat{A}_{c_i,\,c_j})_{r\times
r}\\[4pt]\hline\\[-10pt] & b^T \ea\]
where $c=(c_1,\cdots,c_r)^T,\,b=(b_1,\cdots,b_r)^T$.

\begin{thm}\label{qua:csPRK}
Assume $A_{\tau,\,\sigma}$ is a bivariate polynomial of degree
$d^{\tau}$ in $\tau$ and degree $d^{\sigma}$ in $\sigma$,
$\widehat{A}_{\tau,\,\sigma}$ is a bivariate polynomial of degree
$\widehat{d}^{\tau}$ in $\tau$ and degree $\widehat{d}^{\sigma}$ in
$\sigma$, and the quadrature formula $(b_i,c_i)_{i=1}^r$ is of order
$p$. If
$(A_{\tau,\,\sigma},\,B_\tau,\,C_\tau;\;\widehat{A}_{\tau,\,\sigma},
\,\widehat{B}_\tau,\,\widehat{C}_\tau)$ satisfies
$B_\tau=\widehat{B}_\tau\equiv1 ,\,C_\tau=\widehat{C}_\tau=\tau$
(then $\mathcal {B}(\infty)$ holds) and all $\mathcal {C}(\alpha)$,
$\mathcal {\widehat{C}}(\alpha)$, $\mathcal {D}(\beta)$, $\mathcal
{\widehat{D}}(\beta)$ hold, then the classical PRK method with
coefficients
$(b_{j}A_{c_i,c_j},b_i,\,c_i;\;b_{j}\widehat{A}_{c_i,c_j},b_i,\,c_i)$
is of order
$$\min(p, 2\eta+2, \eta+\zeta+1),$$
where $\eta=\min(\alpha, p-d^{\sigma},p-\widehat{d}^{\sigma})$ and
$\zeta=\min(\beta, p-d^{\tau},p-\widehat{d}^{\tau})$.
\end{thm}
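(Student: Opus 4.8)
The plan is to mirror the argument already used for Theorem \ref{qua:csRK}, but now carrying the two coefficient functions $A_{\tau,\,\sigma}$ and $\widehat{A}_{\tau,\,\sigma}$ along in parallel and feeding the five partitioned simplifying assumptions \eqref{PRK-simpl-assump} through the order-$p$ quadrature. Because $B_\tau=\widehat{B}_\tau\equiv1$ and $C_\tau=\widehat{C}_\tau=\tau$, the product $C_\sigma^{\kappa-1}\widehat{C}_\sigma^{\iota}$ appearing in every assumption collapses to the single monomial $\sigma^{\kappa+\iota-1}$, so that once the free variable is frozen at a quadrature node each integrand is an ordinary one-variable polynomial. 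The first step is therefore to apply $(b_i,c_i)_{i=1}^r$ to each of $\mathcal{B}$, $\mathcal{C}$, $\widehat{\mathcal{C}}$, $\mathcal{D}$, $\widehat{\mathcal{D}}$ and to record, via the degree of the frozen integrand, exactly how far the quadrature remains exact.

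The bookkeeping proceeds as follows. For $\mathcal{B}$ the integrand $\tau^{\kappa+\iota-1}$ has degree $\kappa+\iota-1$, so exactness holds whenever $\kappa+\iota\le p$, producing the classical $B(p)$. For $\mathcal{C}(\alpha)$ I freeze $\tau=c_i$; the integrand $A_{c_i,\,\sigma}\sigma^{\kappa+\iota-1}$ has degree $d^{\sigma}+\kappa+\iota-1$, so the discrete identity $\sum_j(b_jA_{c_i,\,c_j})c_j^{\,m-1}=c_i^{\,m}/m$ survives for $m\le\min(\alpha,\,p-d^{\sigma})$, while the same computation applied to $\widehat{\mathcal{C}}(\alpha)$ yields the companion relation for $\widehat{A}$ up to $m\le\min(\alpha,\,p-\widehat{d}^{\sigma})$. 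Requiring \emph{both} discrete conditions simultaneously forces the common level $\eta=\min(\alpha,\,p-d^{\sigma},\,p-\widehat{d}^{\sigma})$, exactly the $\eta$ of the statement. Symmetrically, freezing $\sigma=c_j$ in $\mathcal{D}(\beta)$ and $\widehat{\mathcal{D}}(\beta)$ gives integrands $\tau^{\kappa+\iota-1}A_{\tau,\,c_j}$ and $\tau^{\kappa+\iota-1}\widehat{A}_{\tau,\,c_j}$ of degrees $d^{\tau}+\kappa+\iota-1$ and $\widehat{d}^{\tau}+\kappa+\iota-1$; after multiplying both sides by the factor $b_j$ (just as in the proof of Theorem \ref{qua:csRK}) one gets the classical $D$ and $\widehat{D}$ relations, valid up to the common level $\zeta=\min(\beta,\,p-d^{\tau},\,p-\widehat{d}^{\tau})$.

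The retrieved method \eqref{PRK} then carries the matched data $b_i=\widehat{b}_i$, $c_i=\widehat{c}_i$ and satisfies the classical partitioned simplifying assumptions $B(p)$, $C(\eta)$, $\widehat{C}(\eta)$, $D(\zeta)$, $\widehat{D}(\zeta)$. The concluding step is to invoke the classical order theorem for partitioned Runge-Kutta methods — the bi-colored-tree counterpart of Butcher's result \cite{butcher64ipa}, obtained exactly as in the proof of Theorem \ref{cprk:order} and in \cite{hairernw93sod} — which delivers the order $\min(p,\,2\eta+2,\,\eta+\zeta+1)$. I expect the only delicate point to be the degree bookkeeping of the middle paragraph: one must check that the coupling of the two abscissa families through the single monomial $\sigma^{\kappa+\iota-1}$ does not shorten the exactness range, and that demanding the unhatted and hatted discrete conditions together is precisely what produces the triple minima defining $\eta$ and $\zeta$. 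The appeal to the classical partitioned order theorem is then routine, since the equality $\widehat{b}_i=b_i$ is exactly the hypothesis that reduces the bi-colored-tree analysis to the three stated quantities.
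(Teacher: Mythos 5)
Your proposal is correct and follows essentially the same route as the paper, which simply states that the argument parallels Theorem \ref{qua:csRK} and then invokes the classical order result for partitioned Runge--Kutta methods from \cite{cohenh11lei} (Theorem 4.1); your degree bookkeeping leading to the triple minima defining $\eta$ and $\zeta$ is exactly the intended expansion of that one-line proof.
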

\begin{proof}
The proof of the current theorem is similar to that of Theorem
\ref{qua:csRK}, and one needs using a known result presented in
\cite{cohenh11lei} (see Theorem 4.1, Page 97).
\end{proof}

\section{Construction of symplectic (P)RK methods}

We consider Hamiltonian differential equations written in the
following form
\begin{equation}\label{Part-Hs}
\begin{cases}
\vec{\dot{p}}=-\nabla_{\vec{q}}
\mathbf{H}(\vec{p},\vec{q}),\\[6pt]
\vec{\dot{q}}=\nabla_{\vec{p}} \mathbf{H}(\vec{p},\vec{q}),
\end{cases}
\end{equation}
where the Hamiltonian namely the total energy
$\mathbf{H}(\vec{p},\vec{q})$ is a smooth scalar function of
position coordinates $\vec{p}\in\mathbb{R}^d$ and momenta
$\vec{q}\in\mathbb{R}^d$, and we specify the initial condition as
$(\vec{p}(t_0),\,\vec{q}(t_0))=(\vec{p}_0,\,\vec{q}_0)\in
{\mathbb{R}}^d \times{\mathbb{R}}^d.$ It is well-known that the
energy of the system \eqref{Part-Hs} is a conserved quantity along
the analytic solution of the system, and the system has a geometric
structure called symplecticity
\cite{Arnold89mmo,Feng84ods,Feng95kfc,Fengqq10sga}, i.e., the flow
$\varphi_t$ of the system satisfies
$$(\varphi'_t)^TJ\varphi'_t=J,$$
where $J$ is a canonical structure matrix \cite{hairerlw06gni} and
$\varphi'_t$ denotes the derivative of $\varphi_t$ with respect to
initial values. An important numerical integrator for this special
system is the so-called symplectic method
\cite{Feng84ods,Feng95kfc,hairerlw06gni}, which has an excellent
long-time behavior in simulating the exact flow of the system. A
one-step method $\Phi_h:
(\vec{p}_0,\,\vec{q}_0)\mapsto(\vec{p}_1,\,\vec{q}_1)$ for solving
Hamiltonian systems is called symplectic if
$$(\Phi'_h)^TJ\Phi'_h=J.$$
More surveys and discussions about this special class of algorithms
can be found in \cite{Fengqq10sga,hairerlw06gni,sanzc94nhp} and
references therein.

It is known that for an $r$-stage RK method with coefficients
$(a_{ij},b_i,c_i)$, when applied to the Hamiltonian system
\eqref{Part-Hs}, it can preserve the symplecticity of the system
only if the RK coefficients satisfy
\cite{sanz88rkm,lasagni88crk,suris89ctg}
\begin{equation}\label{cond:SRK}
    b_ia_{ij}+b_ja_{ji}=b_ib_j,\;i,j=1,\cdots,r,
\end{equation}
and for an $r$-stage PRK method with coefficients
$(a_{ij},b_i,c_i;\widehat{a}_{ij},\widehat{b}_i,\widehat{c}_i)$, the
conditions for symplecticity are \cite{sun95coh,hairerlw06gni}
\begin{equation}\label{cond:SPRK}
  \begin{split}
    &b_i\widehat{a}_{ij}+\widehat{b}_ja_{ji}=b_i\widehat{b}_j,\;i,j=1,\cdots,r,\\
    &b_i=\widehat{b}_i,\;i=1,\cdots,r.
  \end{split}
  \end{equation}
Analogously to the classical results above, for a csRK method with
$B_\tau\equiv1,\,C_\tau=\tau$, it is symplectic if
\begin{equation}\label{sym:csRK1}
A_{\tau,\,\sigma}+A_{\sigma,\,\tau}=1,\quad\tau,\,\sigma\in[0,\,1],
\end{equation}
while for a csPRK method with
$B_\tau=\widehat{B}_\tau\equiv1,\,C_\tau=\widehat{C}_\tau=\tau$, it
is symplectic if
\begin{equation}\label{sym:csPRK}
\widehat{A}_{\tau,\,\sigma}+A_{\sigma,\,\tau}=1,\quad\tau,\,\sigma\in[0,\,1].
\end{equation}

Next, let us proceed by considering the construction of symplectic
RK and symplectic PRK type schemes, separately for both
non-classical and classical cases.

\subsection{Symplectic RK type method}

\begin{thm}\label{symthm}
The csRK method \eqref{crk} with $B_\tau=1,\,C_\tau=\tau$ is
symplectic if $A_{\tau,\,\sigma}$ has the form
\begin{equation}\label{sym:csRK2}
A_{\tau,\,\sigma}=\frac{1}{2}+\sum_{0<i+j\in \mathbb{Z}}\alpha_{ij}
P_i(\tau)P_j(\sigma),\quad\alpha_{ij}\in \mathbb{R},
\end{equation}
where $\alpha_{ij}$ is skew-symmetric, i.e.,
$\alpha_{ij}=-\alpha_{ji},\,i+j>0$.
\end{thm}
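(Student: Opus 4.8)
The plan is to verify the sufficient condition for symplecticity already recorded in \eqref{sym:csRK1}, namely $A_{\tau,\,\sigma}+A_{\sigma,\,\tau}=1$ for all $\tau,\sigma\in[0,1]$, by substituting the proposed Legendre expansion \eqref{sym:csRK2} directly into the left-hand side. Since symplecticity of the csRK method with $B_\tau\equiv1$ and $C_\tau=\tau$ is guaranteed precisely by that identity, it suffices to show that the given form of $A_{\tau,\,\sigma}$ forces the left-hand side to collapse to the constant $1$.

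First I would write out both $A_{\tau,\,\sigma}$ and its transpose $A_{\sigma,\,\tau}$ from \eqref{sym:csRK2}, the latter obtained simply by interchanging the arguments $\tau$ and $\sigma$ in every Legendre factor. Adding the two constant terms gives $\tfrac12+\tfrac12=1$, so the remaining task is to show that the two double sums cancel. In the sum arising from $A_{\sigma,\,\tau}$, I would relabel the summation indices by swapping $i\leftrightarrow j$; this is legitimate because the index set $\{(i,j):i+j>0\}$ is invariant under that exchange. After relabeling, that sum reads $\sum \alpha_{ji}P_i(\tau)P_j(\sigma)$, so combining it with the sum coming from $A_{\tau,\,\sigma}$ yields
$$A_{\tau,\,\sigma}+A_{\sigma,\,\tau}=1+\sum_{0<i+j}(\alpha_{ij}+\alpha_{ji})P_i(\tau)P_j(\sigma).$$
Invoking the skew-symmetry hypothesis $\alpha_{ij}=-\alpha_{ji}$ (which in particular forces $\alpha_{ii}=0$) makes every coefficient $\alpha_{ij}+\alpha_{ji}$ vanish, the double sum disappears, and the desired identity $A_{\tau,\,\sigma}+A_{\sigma,\,\tau}=1$ follows. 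Appealing once more to \eqref{sym:csRK1} then concludes that the method is symplectic.

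I do not anticipate a genuine obstacle here: the argument is essentially a bookkeeping verification once the criterion \eqref{sym:csRK1} is granted. The only point requiring slight care is the index relabeling—one must confirm that the summation region is symmetric under $i\leftrightarrow j$ and that the isolated constant $\tfrac12$ (corresponding to the excluded pair $(0,0)$, using $P_0\equiv1$) is correctly combined with its transpose to reproduce the full constant $1$. Should one wish to emphasize consistency with the earlier construction, I would also remark that \eqref{sym:csRK2} is a special case of the general form \eqref{coef} in Theorem \ref{mainthm}, obtained by imposing skew-symmetry on the expansion coefficients.
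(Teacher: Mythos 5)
Your argument is correct and is essentially the paper's own proof: both substitute the Legendre expansion of $A_{\tau,\,\sigma}$ into the symplecticity criterion \eqref{sym:csRK1}, swap the summation indices $i\leftrightarrow j$ in the transposed term, and match coefficients against the basis $\{P_i(\tau)P_j(\sigma)\}$ to see that the condition reduces to $\alpha_{00}=\tfrac12$ and $\alpha_{ij}=-\alpha_{ji}$ for $i+j>0$. The only cosmetic difference is direction — the paper derives these coefficient conditions from the identity, while you verify the identity from the conditions — which is, if anything, the more natural order for the stated sufficiency claim.
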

\begin{proof}
Assume $A_{\tau,\,\sigma}$ can be expanded as a series in terms of
basis $\left\{P_i(\tau)P_j(\sigma)\right\}_{i,j=0}^\infty$, written
in the form
\begin{equation*}
A_{\tau,\,\sigma}=\sum_{0\leq i,j\in\mathbb{Z}}\alpha_{ij}
P_i(\tau)P_j(\sigma),\quad\alpha_{ij}\in \mathbb{R},
\end{equation*}
and thus we have
\begin{equation*}
A_{\sigma,\,\tau}=\sum_{0\leq i,j\in\mathbb{Z}}\alpha_{ij}
P_i(\sigma)P_j(\tau) =\sum_{0\leq i,j\in\mathbb{Z}}\alpha_{ji}
P_j(\sigma)P_i(\tau).
\end{equation*}
Substituting the above two expressions into \eqref{sym:csRK1} and
collecting the like basis, it gives
$$\alpha_{00}=\frac{1}{2};\;\,\alpha_{ij}=-\alpha_{ji},\,i+j>0,$$
which completes the proof.
\end{proof}
It is evident that combing Theorem \ref{symthm} with Theorem
\ref{mainthm}, we can construct symplectic csRK methods of arbitrary
order. Moreover, when consider using a quadrature formula, we have
the following corollary.
\begin{coro}
If the coefficient $A_{\tau,\,\sigma}$ of a csRK method with
$B_\tau=1,\,C_\tau=\tau$ are given by \eqref{sym:csRK2}, then the RK
scheme $(b_{j}A_{c_i,c_j},b_i,\,c_i)$ obtained by using a quadrature
formula $(b_i,c_i)_{i=1}^r$ is always symplectic and the order of
the method can be determined by Theorem \ref{qua:csRK}.
\end{coro}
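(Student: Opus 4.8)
The goal is to verify the symplecticity condition \eqref{cond:SRK}, namely $b_ia_{ij}+b_ja_{ji}=b_ib_j$, for the retrieved RK coefficients $a_{ij}=b_jA_{c_i,c_j}$ arising from the quadrature formula $(b_i,c_i)_{i=1}^r$. The plan is to substitute the discrete coefficients directly into the left-hand side and reduce everything to the continuous symplecticity relation \eqref{sym:csRK1}, which we already know holds for $A_{\tau,\sigma}$ given by \eqref{sym:csRK2}. This is the key observation: the discretization preserves symplecticity \emph{pointwise}, so no integration or order-of-accuracy argument is needed, in sharp contrast to the order theorem (Theorem \ref{qua:csRK}) where the quadrature degree $p$ enters crucially.

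First I would write out the candidate expression
\[
b_ia_{ij}+b_ja_{ji}=b_i\big(b_jA_{c_i,c_j}\big)+b_j\big(b_iA_{c_j,c_i}\big)=b_ib_j\big(A_{c_i,c_j}+A_{c_j,c_i}\big).
\]
Next I would invoke Theorem \ref{symthm}: since $A_{\tau,\sigma}$ has the form \eqref{sym:csRK2} with skew-symmetric $\alpha_{ij}$, it satisfies \eqref{sym:csRK1}, i.e.\ $A_{\tau,\sigma}+A_{\sigma,\tau}=1$ identically on $[0,1]\times[0,1]$. Evaluating this identity at $\tau=c_i,\sigma=c_j$ gives $A_{c_i,c_j}+A_{c_j,c_i}=1$ for every pair of nodes. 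Substituting back yields $b_ia_{ij}+b_ja_{ji}=b_ib_j$, which is exactly condition \eqref{cond:SRK}, so the retrieved RK method is symplectic. The statement about the order then follows immediately by quoting Theorem \ref{qua:csRK}.

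The computation is entirely routine, so there is no genuine analytic obstacle here; the only thing to be careful about is the bookkeeping of the factor $b_j$ hidden inside $a_{ij}=b_jA_{c_i,c_j}$, so that the two cross terms assemble into the symmetric sum $A_{c_i,c_j}+A_{c_j,c_i}$ with a common prefactor $b_ib_j$. I would emphasize in the write-up that the pointwise identity \eqref{sym:csRK1} is what makes the argument independent of the quadrature weights and nodes, which is precisely why symplecticity is inherited by \emph{any} quadrature-based discretization whereas the attainable order is limited by $p$, $d^\tau$, and $d^\sigma$ as in Theorem \ref{qua:csRK}.
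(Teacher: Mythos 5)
Your proposal is correct and follows essentially the same route as the paper: evaluate the pointwise identity $A_{\tau,\sigma}+A_{\sigma,\tau}=1$ (guaranteed by the form \eqref{sym:csRK2} with skew-symmetric coefficients) at the quadrature nodes, and assemble the factor $b_ib_j$ to recover condition \eqref{cond:SRK}. The paper's own proof is exactly this two-line computation, so there is nothing further to reconcile.
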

\begin{proof}
When applying a quadrature formula $(b_i,c_i)_{i=1}^r$ to calculate
the integrals of the associated csRK method, it can be seen that
$$A_{c_i,\,c_j}+A_{c_j,\,c_i}=1,\;\;i,j=1,\cdots,r,$$ thus
$$b_i(b_jA_{c_i,\,c_j})+b_j(b_iA_{c_j,\,c_i})=b_ib_j,\;\;i,j=1,\cdots,r,$$
which implies that the conditions for symplecticity, namely
\eqref{cond:SRK}, are fulfilled.
\end{proof}
\begin{exa}
Let $B_\tau=1,\,C_\tau=\tau$ and take
\begin{equation}\label{exa1}
A_{\tau,\,\sigma}=\frac{1}{2}+\sum_{\iota=0}^{s-1}\xi_{\iota+1}
\Big(P_{\iota+1}(\tau)P_\iota(\sigma)-P_{\iota+1}(\sigma)P_\iota(\tau)\Big),
\end{equation}
by using some suitable quadrature formulae, we can retrieve a lot of
classical symplectic RK methods including $(s+1)$-stage Lobatto
IIIE, Radau IB, Radau IIB and Gauss-Legendre RK schemes, and the
orders are $2s,2s+1,2s+1,2s+2$ respectively.

If we introduce a parameter $\lambda$ by adding a term
$\lambda\xi_{s+1}\big(P_{s+1}(\tau)P_s(\sigma)-P_{s+1}(\sigma)P_s(\tau)\big)$
to \eqref{exa1}, and consider $s=1$, then by using 3-point Gaussian
quadrature formula it produces a new class of symplectic RK methods,
which is shown in Table \ref{exa1:SRK}.
\end{exa}

\subsection{Symplectic PRK type method}

For convenience, we first give the following lemma.
\begin{lem}\label{lem1}
Assume $B_\tau\equiv1$ and $C_\tau=\tau$. If
$(A_{\tau,\,\sigma},\,B_\tau,\,C_\tau)$ satisfies
$\mathfrak{C}(\eta)$ and $\mathfrak{D}(\zeta)$, define
$\widehat{A}_{\tau,\,\sigma}:=1-A_{\sigma,\,\tau}$, then
$(\widehat{A}_{\tau,\,\sigma},\,B_\tau,\,C_\tau)$ satisfies
$\mathfrak{C}(\zeta)$ and $\mathfrak{D}(\eta)$.
\end{lem}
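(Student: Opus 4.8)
The plan is to verify the two claimed simplifying assumptions for $\widehat{A}_{\tau,\,\sigma}$ directly from their definitions, substituting $\widehat{A}_{\tau,\,\sigma}=1-A_{\sigma,\,\tau}$ and reducing each resulting integral to a known identity for $A$. Since $B_\tau\equiv1$ and $C_\tau=\tau$, the assumption $\mathfrak{B}(\infty)$ holds, so the elementary identity $\int_0^1\sigma^{\kappa-1}\,\dif\sigma=\frac{1}{\kappa}$ is available for every $\kappa$; this takes care of the constant term $1$ each time the integral is split.

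First I would establish $\mathfrak{C}(\zeta)$ for $\widehat{A}$. Splitting the defining integral gives
\[
\int_0^1\widehat{A}_{\tau,\,\sigma}\,\sigma^{\kappa-1}\,\dif\sigma
=\int_0^1\sigma^{\kappa-1}\,\dif\sigma-\int_0^1 A_{\sigma,\,\tau}\,\sigma^{\kappa-1}\,\dif\sigma .
\]
In the second integral the integration variable $\sigma$ appears as the \emph{first} index of $A$, which is exactly the pattern governed by $\mathfrak{D}(\zeta)$; invoking $\mathfrak{D}(\zeta)$ (with its fixed second argument now read as $\tau$) yields $\frac{1}{\kappa}(1-\tau^{\kappa})$, and subtracting from $\frac{1}{\kappa}$ leaves $\frac{1}{\kappa}\tau^{\kappa}$, which is precisely $\mathfrak{C}(\zeta)$ for $\kappa=1,\ldots,\zeta$.

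Next I would establish $\mathfrak{D}(\eta)$ for $\widehat{A}$ by the symmetric computation. Splitting
\[
\int_0^1\tau^{\kappa-1}\widehat{A}_{\tau,\,\sigma}\,\dif\tau
=\int_0^1\tau^{\kappa-1}\,\dif\tau-\int_0^1\tau^{\kappa-1}A_{\sigma,\,\tau}\,\dif\tau ,
\]
the integration variable $\tau$ now occupies the \emph{second} index of $A$, matching $\mathfrak{C}(\eta)$; applying $\mathfrak{C}(\eta)$ (with fixed first argument $\sigma$) gives $\frac{1}{\kappa}\sigma^{\kappa}$, so the difference is $\frac{1}{\kappa}(1-\sigma^{\kappa})$, which is $\mathfrak{D}(\eta)$ for $\kappa=1,\ldots,\eta$.

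The only point requiring genuine care — and the sole place where an index or sign slip could occur — is the bookkeeping of which variable is being integrated and which slot of $A$ it fills, because passing from $A_{\tau,\,\sigma}$ to $A_{\sigma,\,\tau}$ interchanges the two arguments and thereby swaps the roles of $\mathfrak{C}$ and $\mathfrak{D}$. Once this correspondence is set up correctly the two verifications are immediate, and the ranges $\zeta$ and $\eta$ follow automatically from the ranges of validity of $\mathfrak{D}(\zeta)$ and $\mathfrak{C}(\eta)$, respectively.
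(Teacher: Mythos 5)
Your proof is correct, but it takes a different route from the paper. The paper disposes of this lemma in one line by invoking Theorem \ref{mainthm}: since $A_{\tau,\,\sigma}$ satisfies $\mathfrak{C}(\eta)$ and $\mathfrak{D}(\zeta)$, it admits the Legendre expansion \eqref{coef} with $(\alpha,\beta)=(\eta,\zeta)$, and one checks that $1-A_{\sigma,\,\tau}$ then has exactly the form \eqref{coef} with $(\alpha,\beta)=(\zeta,\eta)$ (the two finite sums trade their upper limits $N_1\leftrightarrow N_2$ and the residual coefficients become $\tilde\gamma_{ij}=-\gamma_{ji}$), whence the conclusion. You instead verify the simplifying assumptions directly: splitting off the constant $1$, using $\int_0^1\sigma^{\kappa-1}\,\dif\sigma=\tfrac1\kappa$, and observing that transposing the arguments of $A$ turns the integral pattern of $\mathfrak{C}$ into that of $\mathfrak{D}$ and vice versa. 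Your index bookkeeping is right in both computations, and the ranges $\kappa\le\zeta$ and $\kappa\le\eta$ come out correctly. Your argument is more elementary and self-contained --- it needs nothing beyond the definitions and works without assuming $A$ admits a convergent Legendre series --- while the paper's argument is shorter given the machinery of Theorem \ref{mainthm} already in place and makes the structural reason for the $\eta\leftrightarrow\zeta$ swap visible at the level of the expansion coefficients.
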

\begin{proof}
The proof can be directly deduced from Theorem \ref{mainthm}.
\end{proof}
\begin{thm}\label{symthm:csPRK}
If a csPRK method \eqref{cprk} with coefficients
$(A_{\tau,\,\sigma},\,B_\tau,\,C_\tau;\;\widehat{A}_{\tau,\,\sigma},
\,\widehat{B}_\tau,\,\widehat{C}_\tau)$ satisfying
$$B_\tau=\widehat{B}_\tau\equiv1,$$
$$C_\tau=\widehat{C}_\tau=\tau,$$
$$\widehat{A}_{\tau,\,\sigma}=1-A_{\sigma,\,\tau},$$
and $A_{\tau,\,\sigma}$ is given such that both $\mathfrak{C}(\eta)$
and $\mathfrak{D}(\zeta)$ hold, then this csPRK method is symplectic
and of order at least
$$2\min(\eta,\,\zeta)+1.$$
\end{thm}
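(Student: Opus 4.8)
The plan is to split the statement into its two assertions---symplecticity and order---and dispatch them independently. Symplecticity comes essentially for free: for a csPRK method with $B_\tau=\widehat{B}_\tau\equiv1$ and $C_\tau=\widehat{C}_\tau=\tau$, the sufficient condition for symplecticity is exactly \eqref{sym:csPRK}, i.e.\ $\widehat{A}_{\tau,\,\sigma}+A_{\sigma,\,\tau}=1$ for all $\tau,\sigma\in[0,1]$. Since the hypothesis defines $\widehat{A}_{\tau,\,\sigma}:=1-A_{\sigma,\,\tau}$, this identity holds by construction, so the symplecticity half needs no further argument.

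For the order assertion the strategy is to verify the hypotheses of Theorem \ref{cprk:order} and then read off the guaranteed order. First I would observe that under $B_\tau=\widehat{B}_\tau\equiv1$ and $C_\tau=\widehat{C}_\tau=\tau$ every integrand appearing in the csPRK simplifying assumptions \eqref{PRK-simpl-assump} collapses, because $C_\sigma^{\kappa-1}\widehat{C}_\sigma^{\,\iota}=\sigma^{\kappa+\iota-1}$ depends only on the combined power $m:=\kappa+\iota$. In particular $\mathcal{B}(\xi)$ reduces to $\int_0^1\tau^{\,m-1}\,\dif\tau=1/m$, which holds for every $m$, so $\mathcal{B}(\infty)$ is automatic and I may take $\rho=\infty$; the requirement $\widehat{B}_\tau\equiv B_\tau$ is likewise satisfied.

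The crux is to translate the non-partitioned hypotheses on $A_{\tau,\,\sigma}$ into the four partitioned conditions that Theorem \ref{cprk:order} requires. Since the combined exponent $m=\kappa+\iota$ plays exactly the role of the single index $\kappa$ in the csRK assumptions $\mathfrak{C}$ and $\mathfrak{D}$, the hypothesis $\mathfrak{C}(\eta)$ on $A_{\tau,\,\sigma}$ is equivalent to $\mathcal{C}(\eta)$, and $\mathfrak{D}(\zeta)$ on $A_{\tau,\,\sigma}$ is equivalent to $\mathcal{D}(\zeta)$. For the hatted coefficient I would invoke Lemma \ref{lem1}: because $A_{\tau,\,\sigma}$ satisfies $\mathfrak{C}(\eta)$ and $\mathfrak{D}(\zeta)$, the coefficient $\widehat{A}_{\tau,\,\sigma}=1-A_{\sigma,\,\tau}$ satisfies $\mathfrak{C}(\zeta)$ and $\mathfrak{D}(\eta)$, which in the partitioned notation read $\widehat{\mathcal{C}}(\zeta)$ and $\widehat{\mathcal{D}}(\eta)$. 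Collecting these, the method enjoys $\mathcal{C}(\eta)$, $\widehat{\mathcal{C}}(\zeta)$, $\mathcal{D}(\zeta)$, $\widehat{\mathcal{D}}(\eta)$ alongside $\mathcal{B}(\infty)$.

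Finally, Theorem \ref{cprk:order} demands a \emph{single} index $\alpha$ valid for both $\mathcal{C}$-type conditions and a single $\beta$ for both $\mathcal{D}$-type conditions, whereas the $A/\widehat{A}$ construction interchanges the roles of $\eta$ and $\zeta$. The remedy is to take $\alpha=\beta=\min(\eta,\zeta)$, the largest value for which $\mathcal{C}(\alpha)$, $\widehat{\mathcal{C}}(\alpha)$, $\mathcal{D}(\beta)$, $\widehat{\mathcal{D}}(\beta)$ hold simultaneously; Theorem \ref{cprk:order} then yields order at least $\min(\rho,2\alpha+2,\alpha+\beta+1)=\min(\infty,2\min(\eta,\zeta)+2,2\min(\eta,\zeta)+1)=2\min(\eta,\zeta)+1$, as claimed. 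I expect this index-matching to be the only genuinely delicate point: one must recognize that it is the \emph{minimum} of $\eta$ and $\zeta$, not either alone, that survives, precisely because a common pair $(\alpha,\beta)$ is forced to serve all four simplifying assumptions at once.
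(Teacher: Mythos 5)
Your proposal is correct and follows essentially the same route as the paper's own proof: symplecticity read off directly from \eqref{sym:csPRK}, the reduction of the partitioned simplifying assumptions to $\mathfrak{C}$/$\mathfrak{D}$ under $B_\tau=\widehat{B}_\tau\equiv1$, $C_\tau=\widehat{C}_\tau=\tau$, Lemma \ref{lem1} for the hatted coefficient, and Theorem \ref{cprk:order} with $\alpha=\beta=\min(\eta,\zeta)$. Your write-up is in fact more explicit than the paper's two-line argument, particularly in spelling out why the common index must be the minimum of $\eta$ and $\zeta$.
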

\begin{proof}
Since $B_\tau=\widehat{B}_\tau\equiv1,C_\tau=\widehat{C}_\tau=\tau$,
this implies that $\mathcal{B}(\infty)$ holds. In such a case, note
that $\mathfrak{C}(\eta)$ and $\mathfrak{D}(\zeta)$ are equivalent
to $\mathcal{C}(\eta)$ and $\mathcal{D}(\zeta)$, respectively. Next,
by combining Theorem \ref{cprk:order} and Lemma \ref{lem1} it gives
the final result.
\end{proof}
One can then easily construct symplectic PRK methods by means of
Theorem \ref{symthm:csPRK} and Theorem \ref{qua:csPRK} based on
numerical quadrature formulae. we provide the following corollary.
\begin{coro}\label{coro:PRK}
Assume $A_{\tau,\,\sigma}$ is a bivariate polynomial of degree
$d^{\tau}$ in $\tau$ and degree $d^{\sigma}$ in $\sigma$,
$\widehat{A}_{\tau,\,\sigma}=1-A_{\sigma,\,\tau}$, and the
quadrature formula $(b_i,c_i)_{i=1}^r$ is of order $p$. If
$(A_{\tau,\,\sigma},\,B_\tau,\,C_\tau)$ satisfies $B_\tau\equiv1
,\,C_\tau=\tau$ (then $\mathfrak{B}(\infty)$ holds) and both
$\mathfrak{C}(\eta)$, $\mathfrak{D}(\zeta)$ hold, then the classical
PRK method with coefficients
$(b_{j}A_{c_i,c_j},b_i,\,c_i;\;b_{j}\widehat{A}_{c_i,c_j},b_i,\,c_i)$
is symplectic and of order at least
$$\min(p, 2\alpha+1),$$
where $\alpha=\min(\eta,\,\zeta,\, p-d^{\tau},\,p-d^{\sigma})$.
\end{coro}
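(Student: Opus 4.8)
The plan is to prove the two claims---symplecticity of the retrieved PRK scheme and its order bound---separately, using the classical symplecticity conditions \eqref{cond:SPRK} for the first and Theorem \ref{qua:csPRK} together with Lemma \ref{lem1} for the second.

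For symplecticity, I would observe that the retrieved PRK pair has coefficients $a_{ij}=b_jA_{c_i,c_j}$, $\widehat{a}_{ij}=b_j\widehat{A}_{c_i,c_j}$ and shares the same weights $b_i=\widehat{b}_i$ and nodes $c_i=\widehat{c}_i$. The second condition in \eqref{cond:SPRK}, $b_i=\widehat{b}_i$, then holds by construction. For the first condition I would compute
\begin{equation*}
b_i\widehat{a}_{ij}+\widehat{b}_ja_{ji}=b_ib_j\big(\widehat{A}_{c_i,c_j}+A_{c_j,c_i}\big),
\end{equation*}
and insert the defining relation $\widehat{A}_{\tau,\,\sigma}=1-A_{\sigma,\,\tau}$ at $\tau=c_i,\sigma=c_j$, which gives $\widehat{A}_{c_i,c_j}+A_{c_j,c_i}=1$ and hence $b_i\widehat{a}_{ij}+\widehat{b}_ja_{ji}=b_ib_j=b_i\widehat{b}_j$. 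Thus \eqref{cond:SPRK} holds for every choice of quadrature $(b_i,c_i)_{i=1}^r$, independently of its order $p$.

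For the order estimate, the preparatory step is to transfer the simplifying assumptions and the polynomial degrees from $A_{\tau,\,\sigma}$ to $\widehat{A}_{\tau,\,\sigma}=1-A_{\sigma,\,\tau}$. By Lemma \ref{lem1}, the pair $(\widehat{A},B,C)$ satisfies $\mathfrak{C}(\zeta)$ and $\mathfrak{D}(\eta)$ (the roles of $\eta$ and $\zeta$ being interchanged), and in particular the case $\kappa=1$ of $\mathfrak{C}(\zeta)$ yields $\widehat{C}_\tau=\tau$, so the structural hypotheses of Theorem \ref{qua:csPRK} are met. The reflection $\tau\leftrightarrow\sigma$ in the definition of $\widehat{A}$ also swaps the degrees, giving $\widehat{d}^{\tau}=d^{\sigma}$ and $\widehat{d}^{\sigma}=d^{\tau}$. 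Since for $C_\tau=\widehat{C}_\tau=\tau$ the assumptions $\mathcal{C},\widehat{\mathcal{C}},\mathcal{D},\widehat{\mathcal{D}}$ of the csPRK reduce to $\mathfrak{C}(\cdot),\mathfrak{D}(\cdot)$ for $A$ and for $\widehat{A}$, both members of the pair $\mathcal{C},\widehat{\mathcal{C}}$ and both members of $\mathcal{D},\widehat{\mathcal{D}}$ hold simultaneously up to the common level $\alpha^{\ast}:=\min(\eta,\zeta)$.

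I would then invoke Theorem \ref{qua:csPRK} with both of its input levels equal to $\alpha^{\ast}$. The decisive computation is that its two output parameters, $\min(\alpha^{\ast},p-d^{\sigma},p-\widehat{d}^{\sigma})$ and $\min(\alpha^{\ast},p-d^{\tau},p-\widehat{d}^{\tau})$, collapse to the \emph{same} value after substituting $\widehat{d}^{\tau}=d^{\sigma}$ and $\widehat{d}^{\sigma}=d^{\tau}$, namely
\begin{equation*}
\min(\eta,\zeta,p-d^{\tau},p-d^{\sigma})=\alpha.
\end{equation*}
Theorem \ref{qua:csPRK} then delivers the order $\min(p,2\alpha+2,\alpha+\alpha+1)=\min(p,2\alpha+1)$, as claimed. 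The step I expect to demand the most care is precisely this bookkeeping: because Lemma \ref{lem1} swaps $\eta$ and $\zeta$, and the reflection swaps the two degrees, one must check that both the $\mathcal{C}/\widehat{\mathcal{C}}$ pair and the $\mathcal{D}/\widehat{\mathcal{D}}$ pair degrade to one and the same effective level, so that the term $\alpha+\alpha+1$ (rather than a larger $\eta+\zeta+1$) produces the sharp bound $2\alpha+1$.
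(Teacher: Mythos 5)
Your proposal is correct and follows essentially the same route as the paper's own proof: symplecticity via $\widehat{A}_{c_i,c_j}+A_{c_j,c_i}=1$ yielding the classical conditions \eqref{cond:SPRK}, and the order bound by feeding $\widehat{d}^{\tau}=d^{\sigma}$, $\widehat{d}^{\sigma}=d^{\tau}$ into Theorem \ref{qua:csPRK}. Your write-up merely makes explicit the bookkeeping the paper leaves implicit, namely that Lemma \ref{lem1} swaps $\eta$ and $\zeta$ so both input levels of Theorem \ref{qua:csPRK} equal $\min(\eta,\zeta)$ and the two effective output parameters coincide at $\alpha$, giving $\min(p,2\alpha+2,2\alpha+1)=\min(p,2\alpha+1)$.
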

\begin{proof}
The symplecticity-preserving property of the associated PRK method
can be easily seen from the following fact
$$b_i(b_j\widehat{A}_{c_i,\,c_j})+b_j(b_iA_{c_j,\,c_i})=b_ib_j,\;\;i,j=1,\cdots,r.$$
Note that $\widehat{A}_{\tau,\,\sigma}=1-A_{\sigma,\,\tau}$ is a
bivariate polynomial of degree $d^{\sigma}$ in $\tau$ and degree
$d^{\tau}$ in $\sigma$, thus inserting
$\widehat{d}^{\tau}=d^{\sigma}$ and $\widehat{d}^{\sigma}=d^{\tau}$
into the formula given in Theorem \ref{qua:csPRK} gives the final
result.
\end{proof}
\begin{table}
\[\ba{c|ccc}\frac{1}{2}-\frac{\sqrt{15}}{10} &  \frac{5}{36} &
\frac{2}{9}-\frac{(2+\lambda)\sqrt{15}}{45} &
\frac{5}{36}-\frac{(5-2\lambda)\sqrt{15}}{90}\\[2pt]
\frac{1}{2} & \frac{5}{36}+\frac{(2+\lambda)\sqrt{15}}{72}&
\frac{2}{9} &
\frac{5}{36}-\frac{(2+\lambda)\sqrt{15}}{72} \\[2pt]
\frac{1}{2}+\frac{\sqrt{15}}{10} &
\frac{5}{36}+\frac{(5-2\lambda)\sqrt{15}}{90} &
\frac{2}{9}+\frac{(2+\lambda)\sqrt{15}}{45} & \frac{5}{36}\\[2pt]
\hline &\frac{5}{18} & \frac{4}{9} & \frac{5}{18}\ea
\]
\caption{A class of symplectic RK methods of order 4 retrieved by
Gaussian quadrature.}\label{exa1:SRK}
\end{table}
\begin{table}
\[\ba{c|ccc} \frac{5-\sqrt{15}}{10} &
\frac{5}{36}-\frac{\sqrt{15}}{90} &
\frac{2}{9}-\frac{2\sqrt{15}}{45}&
\frac{5}{36}-\frac{2\sqrt{15}}{45}\\[2pt]
\frac{1}{2} & \frac{5}{36}+\frac{\sqrt{15}}{24}
& \frac{2}{9}& \frac{5}{36}-\frac{\sqrt{15}}{24}\\[2pt]
\frac{5+\sqrt{15}}{10} & \frac{5}{36}+\frac{2\sqrt{15}}{45} &
\frac{2}{9}+\frac{2\sqrt{15}}{45}
& \frac{5}{36}+\frac{\sqrt{15}}{90}\\[2pt]
\hline & \frac{5}{18} & \frac{4}{9} & \frac{5}{18} \ea\qquad
\ba{c|ccc} \frac{5-\sqrt{15}}{10} &
\frac{5}{36}+\frac{\sqrt{15}}{90} &
\frac{2}{9}-\frac{\sqrt{15}}{15}&
\frac{5}{36}-\frac{2\sqrt{15}}{45}\\[2pt]
\frac{1}{2} & \frac{5}{36}+\frac{\sqrt{15}}{36} & \frac{2}{9}&
\frac{5}{36}-\frac{\sqrt{15}}{36}\\[2pt]
\frac{5+\sqrt{15}}{10} & \frac{5}{36}+\frac{2\sqrt{15}}{45} &
\frac{2}{9}+\frac{\sqrt{15}}{15}&
\frac{5}{36}-\frac{\sqrt{15}}{90}\\[2pt]
\hline & \frac{5}{18} & \frac{4}{9} & \frac{5}{18} \ea\] \caption{A
new symplectic PRK method of order 4 retrieved by Gaussian
quadrature.}\label{exa2:SPRK1}
\end{table}
\begin{table}
\[\ba{c|ccc}0 & 0 & 0 & 0 \\[2pt]
\frac{6-\sqrt{6}}{10} & \frac{19}{150}-\frac{\sqrt{6}}{225} &
\frac{1}{4}+\frac{\sqrt{6}}{72}& \frac{67}{300}-\frac{197\sqrt{6}}{1800}\\[2pt]
\frac{6+\sqrt{6}}{10} & \frac{19}{150}+\frac{\sqrt{6}}{225}&
\frac{67}{300}+\frac{197\sqrt{6}}{1800}&
\frac{1}{4}-\frac{\sqrt{6}}{72}\\[2pt]
\hline & \frac{1}{9}& \frac{16+\sqrt{6}}{36}& \frac{16-\sqrt{6}}{36}
\ea\qquad \ba{c|ccc}0 & \frac{1}{9} &
-\frac{1}{18}+\frac{\sqrt{6}}{72}& -\frac{1}{18}-\frac{\sqrt{6}}{72} \\[2pt]
\frac{6-\sqrt{6}}{10} & \frac{1}{9}&\frac{7}{36}+\frac{\sqrt{6}}{72}
&\frac{53}{180}-\frac{41\sqrt{6}}{360}\\[2pt]
\frac{6+\sqrt{6}}{10} & \frac{1}{9}&
\frac{53}{180}+\frac{41\sqrt{6}}{360}&
\frac{7}{36}-\frac{\sqrt{6}}{72}\\[2pt]
\hline & \frac{1}{9}& \frac{16+\sqrt{6}}{36}& \frac{16-\sqrt{6}}{36}
\ea\] \caption{A new symplectic PRK method of order 4 retrieved by
Radau-Left quadrature.}\label{exa2:SPRK2}
\end{table}
\begin{table}
\[\ba{c|ccc} \frac{4-\sqrt{6}}{10} & \frac{14-\sqrt{6}}{72}&
\frac{398-147\sqrt{6}}{1800}& \frac{-7-2\sqrt{6}}{450}\\[2pt]
\frac{4+\sqrt{6}}{10} & \frac{398+147\sqrt{6}}{1800} &
\frac{14+\sqrt{6}}{72}& \frac{-7+2\sqrt{6}}{450}\\[2pt]
1 & \frac{16-\sqrt{6}}{36}& \frac{16+\sqrt{6}}{36}& \frac{1}{9}\\[2pt]
\hline & \frac{16-\sqrt{6}}{36}& \frac{16+\sqrt{6}}{36}& \frac{1}{9}
\ea\qquad \ba{c|ccc} \frac{4-\sqrt{6}}{10} &
\frac{1}{4}-\frac{\sqrt{6}}{72}&
\frac{3}{20}-\frac{31\sqrt{6}}{360}& 0\\[2pt]
\frac{4+\sqrt{6}}{10} & \frac{3}{20}+\frac{31\sqrt{6}}{360} &
\frac{1}{4}+\frac{\sqrt{6}}{72}& 0\\[2pt]
1 & \frac{1}{2}-\frac{\sqrt{6}}{72}& \frac{1}{2}+\frac{\sqrt{6}}{72}& 0\\[2pt]
\hline & \frac{16-\sqrt{6}}{36}& \frac{16+\sqrt{6}}{36}& \frac{1}{9}
\ea\] \caption{A new symplectic PRK method of order 4 retrieved by
Radau-Right quadrature.}\label{exa2:SPRK3}
\end{table}
\begin{table}
\[\ba{c|cccc}
0 & 0 & 0 & 0& 0\\[2pt]
\frac{5-\sqrt{5}}{10} &\frac{11-\sqrt{5}}{120} &
\frac{25+\sqrt{5}}{120} & \frac{25-11\sqrt{5}}{120}&
\frac{-1-\sqrt{5}}{120}\\[2pt]
\frac{5+\sqrt{5}}{10} &\frac{11+\sqrt{5}}{120}
&\frac{25+11\sqrt{5}}{120}&\frac{25-\sqrt{5}}{120}
&\frac{-1+\sqrt{5}}{120}\\[2pt]
1 &\frac{1}{12} & \frac{5}{12} & \frac{5}{12}&\frac{1}{12}\\[2pt]
\hline &\frac{1}{12} & \frac{5}{12} &
\frac{5}{12}&\frac{1}{12}\ea\qquad \ba{c|cccc} 0 &\frac{1}{12} &
\frac{-1+\sqrt{5}}{24} & \frac{-1-\sqrt{5}}{24}& 0\\[2pt]
\frac{5-\sqrt{5}}{10} & \frac{1}{12}
&\frac{25-\sqrt{5}}{120}&\frac{25-11\sqrt{5}}{120}&0\\[2pt]
\frac{5+\sqrt{5}}{10} &\frac{1}{12}
&\frac{25+11\sqrt{5}}{120}&\frac{25+\sqrt{5}}{120}&0\\[2pt]
1 &\frac{1}{12}&\frac{11+\sqrt{5}}{24}&\frac{11-\sqrt{5}}{24}&0\\[2pt]
\hline &\frac{1}{12} & \frac{5}{12} & \frac{5}{12}&\frac{1}{12}\ea\]
\caption{A new symplectic PRK method of order 4 retrieved by Lobatto
quadrature.}\label{exa2:SPRK4}
\end{table}
\begin{table}
\[\ba{c|ccc} \frac{5-\sqrt{15}}{10} & \frac{2}{9} &
\frac{2}{9}-\frac{2\sqrt{15}}{45}&
\frac{1}{18}-\frac{\sqrt{15}}{18}\\[2pt]
\frac{1}{2} & \frac{5}{36}+\frac{\sqrt{15}}{36}& \frac{2}{9}&
\frac{5}{36}-\frac{\sqrt{15}}{36}\\[2pt]
\frac{5+\sqrt{15}}{10}& \frac{1}{18}+\frac{\sqrt{15}}{18}
& \frac{2}{9}+\frac{2\sqrt{15}}{45}&\frac{2}{9}\\[2pt]
\hline & \frac{5}{18} & \frac{4}{9} & \frac{5}{18} \ea\qquad
\ba{c|ccc} \frac{5-\sqrt{15}}{10} & \frac{1}{18} &
\frac{2}{9}-\frac{2\sqrt{15}}{45}&
\frac{2}{9}-\frac{\sqrt{15}}{18}\\[2pt]
\frac{1}{2} & \frac{5}{36}+\frac{\sqrt{15}}{36} & \frac{2}{9}&
\frac{5}{36}-\frac{\sqrt{15}}{36}\\[2pt]
\frac{5+\sqrt{15}}{10} & \frac{2}{9}+\frac{\sqrt{15}}{18}
& \frac{2}{9}+\frac{2\sqrt{15}}{45}& \frac{1}{18}\\[2pt]
\hline & \frac{5}{18} & \frac{4}{9} & \frac{5}{18}\ea\] \caption{A
new symplectic PRK method of order 3 retrieved by Gaussian
quadrature.}\label{exa2:SPRK5}
\end{table}
\begin{exa}
Let $B_\tau=\widehat{B}_\tau\equiv1,\,C_\tau=\widehat{C}_\tau=\tau$
and take
\begin{equation}\label{exa2}
\begin{split}
A_{\tau,\,\sigma}& =\frac{1}{2}+\sum_{\iota=0}^{s-1}\xi_{\iota+1}
P_{\iota+1}(\tau)P_\iota(\sigma)-\sum_{\iota=0}^{s-2}\xi_{\iota+1}
P_{\iota+1}(\sigma)P_\iota(\tau),\\
\widehat{A}_{\tau,\,\sigma}&=1-A_{\sigma,\,\tau}=\frac{1}{2}+\sum_{\iota=0}^{s-2}\xi_{\iota+1}
P_{\iota+1}(\tau)P_\iota(\sigma)-\sum_{\iota=0}^{s-1}\xi_{\iota+1}
P_{\iota+1}(\sigma)P_\iota(\tau),
\end{split}
\end{equation}
by using some suitable quadrature formulae with $s$ quadrature
points, we can retrieve a lot of classical high order symplectic PRK
methods including $s$-stage Lobatto IIIA-IIIB, Radau IA-I\={A},
Radau IIA-II\={A} and Gauss-Legendre schemes, and the orders are
$2s-2,2s-1,2s-1,2s$ respectively.

If we use more quadrature points, then we can get lots of new
symplectic PRK schemes which are not found in the literature. For
instance, consider $s=2$,  it gives the symplectic PRK schemes shown
in Table \ref{exa2:SPRK1}-\ref{exa2:SPRK4}.
\end{exa}
\begin{exa}
Let $B_\tau=\widehat{B}_\tau\equiv1,\,C_\tau=\widehat{C}_\tau=\tau$
and take
\begin{equation}\label{exa3}
\begin{split}
A_{\tau,\,\sigma}& =\frac{1}{2}+\sum_{\iota=0}^{s-1}\xi_{\iota+1}
\Big(P_{\iota+1}(\tau)P_\iota(\sigma)-P_{\iota+1}(\sigma)P_\iota(\tau)
\Big)+\frac{1}{2(2s+1)}P_s(\tau)P_s(\sigma),\\
\widehat{A}_{\tau,\,\sigma}&=1-A_{\sigma,\,\tau}=\frac{1}{2}+\sum_{\iota=0}^{s-1}\xi_{\iota+1}
\Big(P_{\iota+1}(\tau)P_\iota(\sigma)-P_{\iota+1}(\sigma)P_\iota(\tau)
\Big)-\frac{1}{2(2s+1)}P_s(\tau)P_s(\sigma),
\end{split}
\end{equation}
by using suitable quadrature formulae with $s+1$ quadrature points,
we can retrieve a lot of classical high order symplectic PRK methods
including $(s+1)$-stage Lobatto IIIC-III\={C}, Radau IA-I\={A},
Radau IIA-II\={A} and Gauss IA-I\={A} schemes\footnote{These high
order symplectic PRK schemes were firstly constructed by G. Sun in
1995 \cite{sun95coh}.}, and the orders are $2s,2s+1,2s+1,2s+1$
respectively.

If we use more quadrature formulae with different number of
quadrature points, then we can get more new symplectic PRK schemes.
Table \ref{exa2:SPRK5} provides a symplectic PRK scheme based on
Gaussian quadrature for the case $s=1$.
\end{exa}

\section{Conclusions}

This paper investigates the construction of symplectic (P)RK type
methods based on continuous-stage (P)RK methods. In the process of
construction, we first discuss the general continuous-stage (P)RK
methods with the help of the simplifying assumptions and the
expansion technique of orthogonal polynomials (i.e., Legendre
polynomials), and then establish (P)RK methods in classical sense by
means of numerical quadrature formulae. The newly obtained results
are then directly applied to construct a special class of (P)RK
methods which have symplecticity-preserving property for solving
Hamiltonian systems. The line of construction of numerical
algorithms combines the continuous and discrete mathematical theory,
which forms a new and simple way for numerical integration of ODEs
especially for Hamiltonian systems with a symplectic structure.

\section*{Acknowledgments}

The first author was supported by the National Natural Science
Foundation of China (11401055) and the Foundation of Education
Department of Hunan Province (15C0028). The third author was
supported by the NNSFC (11401054).








\end{document}